\newcommand{\pd}{\partial}
\newcommand{\gm}{\gamma}
\newtheorem{thm}{Theorem}[section]
\newtheorem{lem}{Lemma}[section]
\newtheorem{prop}{Proposition}[section]
\newtheorem{definition}{Definition}[section]
\newcommand{\lm}{\lambda}
\newcommand{\R}{\mathbb{R}}
\newcommand{\N}{\mathbb{N}}
\newcommand{\av}[1]{\left| #1 \right|}
\newcommand{\vp}{\varphi}
\newcommand{\va}{\alpha}
\newcommand{\vb}{\beta}
\newcommand{\vd}{\delta}
\newcommand{\vz}{\zeta}
\newcommand{\mK}{\mathcal{K}}
\newcommand{\mC}{\mathcal{C}}
\newcommand{\mM}{\mathcal{M}}
\newcommand{\ve}{\varepsilon}
\newcommand{\con}[1]{\overline{#1}}
\title[The two obstacle problem for the parabolic biharmonic equation]{The two obstacle problem \\ for the parabolic biharmonic equation}
\author{M. Novaga} 
\address{Dipartimento di Matematica, Universit\`a di Pisa, Pisa, Italy}
\email{novaga@dm.unipi.it}
\author{S. Okabe}
\address{Mathematical Institute, Tohoku University, 980-8578 Sendai, Japan}
\email{okabe@math.tohoku.ac.jp}
\begin{document}

\begin{abstract}
We consider a two obstacle problem for the parabolic biharmonic equation 
in a bounded domain. 
We prove long time existence of solutions via an implicit time discretization 
scheme, and we investigate the regularity properties of solutions. 
\end{abstract} 
\subjclass{35K25, 35J35, 49A29}
\keywords{minimizing movements}
\thanks{The second author was partially supported by Grant-in-Aid for Young Scientists (B), No. 24740097, 
and by Strategic Young Researcher Overseas Visits Program for Accelerating Brain Circulation (JSPS)}

\maketitle

\section{Introduction}
The present paper is devoted to discussing a two obstacle problem 
for the parabolic biharmonic equation. 
The obstacle problem for second order elliptic and parabolic equations 
has attracted a great interest in the past years, and there is
an extensive mathematical literature 
(e.g., see \cite{C} and the references therein). On the contrary, 
much less is known on
the obstacle problem for higher order elliptic or parabolic equations. 

The biharmonic operator can be regarded as a prototype 
fourth order differential operator. 
Indeed, elliptic and parabolic PDEs for biharmonic operator 
are under intensive investigation in recent years
(see for example \cite{Ba, CM, CR, G, GG, GGS, GH, GP}).  
Although the obstacle problem for the biharmonic equation has been studied in the 1970s and 1980s (see \cite{BGT, BS, CF, CFT, F1,Schi}), 
some results on the obstacle problem for the corresponding parabolic equation 
have only been obtained very recently.
In particular, in \cite{NO} we considered the case of a single obstacle, i.e., the solution $u$ satisfies $u \ge f$ in $\Omega$ 
for a given obstacle function $f$ in a domain $\Omega$, and 
it is natural to ask whether the results can be extended to the case of two obstacles. 
Indeed, in this paper we prove the existence of solutions for the two obstacle problem,
and we investigate their regularity properties. 

Let $\Omega \subset \R^{N}$, with $N\le 3$, be a bounded domain with $\pd \Omega \in C^{4}$. 
Let $f : \Omega \to \R$ and $g : \Omega \to \R$ denote the obstacle functions satisfying 
\begin{gather}
f \in C^{4}(\con{\Omega}), \quad g \in C^4(\con{\Omega}), \quad f  \le g \quad \text{in} \quad \Omega, \label{fg-cond-1} \\
f < 0 < g \quad \text{on} \quad \pd \Omega.  \label{fg-cond-2}
\end{gather}
We consider a two obstacle problem of the type 
\begin{align} \label{P}
\begin{cases}
(\pd_{t} u + \Delta^{2} u)(u-f) \le 0 \qquad \,\,\,& \text{in} \quad \Omega \times \R_{+}, \\
 (\pd_{t} u + \Delta^{2} u)(u-g) \le 0 \qquad \,\,\,& \text{in} \quad \Omega \times \R_{+}, \\
 \pd_{t} u + \Delta^{2} u = 0 \qquad \,\,\,& \text{in} \quad \{\, (x,t) \in \Omega \times \R_{+} \mid f(x) < u(x,t) < g(x) \,\}, \\
 f \le u \le g \qquad \,\, \qquad & \text{in} \quad \Omega \times \R_{+}, \\
 u = \nabla u \cdot \nu^{\Omega} = 0 \,\,\,\,\quad & \text{on} \quad \pd \Omega \times \R_{+}, \\
 u(\cdot,0)=u_{0}(\cdot) \qquad \,\,\,\,\, & \text{in} \quad \Omega,   
\end{cases} \tag{P}
\end{align}
where $\nu^{\Omega}$ denotes the unit normal vector on $\pd \Omega$,
and the initial datum $u_{0} : \Omega \to \R$ satisfies 
\begin{align} \label{i-cond}
u_{0} \in H^{2}_{0}(\Omega), \quad f \le u_{0} \le g \quad \text{in} \quad \Omega. 
\end{align}
Here we define a weak solution of \eqref{P}. To this aim, we set 
\begin{align} \label{mc-K}
\mK &:= \{\, u \in L^2(0,T;H^2_0(\Omega)) \cap H^1(0,T; L^2(\Omega)) 
\mid u(x,0)= u_0(x) \,\,\,\text{a.e. in} \,\,\, \Omega, \\
& \qquad \qquad \qquad \qquad \qquad \qquad \qquad \qquad 
f(x) \le u(x,t) \le g(x) \,\,\, \text{a.e. in} \,\,\, \Omega \times (\, 0,T \,) \,\}. \notag
\end{align}
\begin{definition} \label{weak-sol}
We say that a function $u$ is a weak solution of \eqref{P} if 
\begin{enumerate}
\item[{\rm (i)}] $u \in \mK${\rm ;} 
\item[{\rm (ii)}] for any $w \in \mK$,  
\begin{align}
\int^T_0 \!\!\! \int_{\Omega} \left[ \pd_t u (w-u) + \Delta u \Delta (w-u) \right] \, dx dt \ge 0\,. 
\end{align}
\end{enumerate}
\end{definition}
Let us denote by $\Omega_0$ the coincidence set of $f$ and $g$, i.e., 
\begin{align} \label{Omega_0}
\Omega_0 = \{\, x \in \Omega \mid f(x)=g(x) \,\}. 
\end{align}
The main result of this paper is the following: 
\begin{thm} \label{main-thm}
Let $N \le 3$. 
Let $f$ and $g$ satisfy \eqref{fg-cond-1}-\eqref{fg-cond-2}. 
Then, for any initial datum $u_0$ satisfying \eqref{i-cond}, the problem \eqref{P} possesses a unique weak solution 
\begin{align}
u \in L^{\infty}(\R_+ ; H^2_0(\Omega)) \cap H^1(\R_+ ; L^2(\Omega)).  
\end{align}
Moreover the quantity $\mu_t := \pd_t u(\cdot,t) + \Delta^2 u(\cdot, t)$ defines a signed measure in $\Omega$ for 
a.e. $t \in \R_+$, and for any $T>0$ there exists a constant $C>0$ such that 
\begin{align}
\int^T_0 \mu_t(\Omega)^2 \, dt < C + T \| \Delta^2 f \|^2_{L^{\infty}(\Omega_0)}. 
\end{align}
Furthermore the following regularity properties hold$\colon$  
\begin{enumerate}
\item[{\rm (i)}] $u \in L^2(\R_+ ; W^{2, \infty}(\Omega))$. In particular, if $N=1$, 
\begin{align}
u \in C^{0,\vb}(\R_+ ; C^{1, \gm}(\Omega)) \,\,\, \text{with} \,\,\, 0 < \gm < \dfrac{1}{2} \,\,\, \text{and} \,\,\, 
0 < \vb < \dfrac{1-2 \gm}{8}, 
\end{align}   
if $N \in  \{ 2, 3 \}$, 
\begin{align}
u \in C^{0,\vb}(\R_+ ; C^{0, \gm}(\Omega)) \,\,\, \text{with} \,\,\, 0 < \gm < \dfrac{4-N}{2} \,\,\, \text{and} \,\,\, 
0 < \vb < \dfrac{4-N-2 \gm}{8}; 
\end{align} 
\item[{\rm (ii)}] the signed measure $\mu_t$ satisfies 
\begin{gather}
\mu_t \lfloor_{\Omega_0} = \Delta^2 f,  \label{in-fg-coin} \\
{\rm supp}\, \mu_t \lfloor_{\Omega \setminus \Omega_0} \subset 
\{\, (x,t) \in (\Omega \setminus \Omega_0) \times \R_+ \mid u(x,t)= f(x) \,\,\, \text{or} \,\,\, u(x,t)=g(x) \,\}\,, \label{supp-mu_t}
\end{gather}
with
\begin{align}
\mu_t 
\begin{cases}
\ge 0 \quad & \text{in} \quad \{\, (x,t) \in (\Omega \setminus \Omega_0) \times \R_+ \mid u(x,t)= f(x) \,\} , \\
\le 0 \quad & \text{in} \quad \{\, (x,t) \in (\Omega \setminus \Omega_0) \times \R_+ \mid u(x,t)=g(x) \,\} . 
\end{cases}
\end{align}
In particular, $u$ satisfies \eqref{P} in the sense of distributions. 
\end{enumerate}
\end{thm}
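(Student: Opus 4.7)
\emph{Time discretization.} I would construct solutions by a minimizing movement scheme. Fix a step $h>0$, set $u_0^h:=u_0$, and at each iteration let $u_n^h$ be the unique minimizer over the closed convex set $\mathcal{K}_h:=\{v\in H^2_0(\Omega) : f\le v\le g\}$ of
\[
E_n^h(v)=\tfrac12\int_\Omega |\Delta v|^2\,dx+\tfrac{1}{2h}\int_\Omega (v-u_{n-1}^h)^2\,dx.
\]
Existence and uniqueness follow from direct methods, and the minimizer satisfies the Euler--Lagrange variational inequality
\[
\int_\Omega \tfrac{u_n^h-u_{n-1}^h}{h}(w-u_n^h)\,dx+\int_\Omega \Delta u_n^h\,\Delta(w-u_n^h)\,dx\ge 0,\quad \forall w\in\mathcal{K}_h.\qquad(\ast)
\]
Testing $(\ast)$ with the admissible choice $w=u_{n-1}^h$ and telescoping yields
\[
\|\Delta u_N^h\|_{L^2}^2+2\sum_{n=1}^N h\,\|(u_n^h-u_{n-1}^h)/h\|_{L^2}^2\le \|\Delta u_0\|_{L^2}^2,
\]
so the piecewise linear interpolant $u^h$ is uniformly bounded in $L^\infty(\R_+;H^2_0(\Omega))\cap H^1(\R_+;L^2(\Omega))$.

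\emph{Passage to the limit and uniqueness.} By Banach--Alaoglu and Aubin--Lions, a subsequence converges weakly to some $u\in\mathcal{K}$ in these spaces and strongly in $L^2_{\rm loc}(\R_+;L^2(\Omega))$. To pass to the limit in $(\ast)$ integrated in time, for any $w\in\mathcal{K}$ I construct admissible step approximants $w^h\in\mathcal{K}_h$ (e.g.\ by discrete-in-time averaging, using that $f\le w\le g$ is preserved), and exploit weak lower semicontinuity of $\int|\Delta u^h|^2$ to recover the inequality in Definition \ref{weak-sol}. Uniqueness is immediate: for two weak solutions $u_1,u_2$, inserting $w=u_2$ in the inequality for $u_1$ and $w=u_1$ in the one for $u_2$ and summing gives $\tfrac12\tfrac{d}{dt}\|u_1-u_2\|_{L^2}^2\le 0$, whence $u_1\equiv u_2$.

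\emph{Structure of the measure $\mu_t$.} Define $\mu_n^h:=(u_n^h-u_{n-1}^h)/h+\Delta^2 u_n^h$ distributionally. Using one-sided admissible perturbations $w=u_n^h\pm\varepsilon\varphi$ of $(\ast)$ (with $\varphi\in C_c^\infty(\Omega)$ of appropriate sign and support avoiding the active obstacle), one obtains $\mu_n^h=0$ on $\{f<u_n^h<g\}$, $\mu_n^h\ge 0$ on $\{u_n^h=f\}\setminus\Omega_0$, and $\mu_n^h\le 0$ on $\{u_n^h=g\}\setminus\Omega_0$. Each one-signed piece is a nonnegative distribution, hence a Radon measure by Schwartz's theorem. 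On $\Omega_0$, the constraint $f\le u_n^h\le g=f$ forces $u_n^h\equiv f$ (by induction starting from $u_0=f=g$ on $\Omega_0$, which follows from \eqref{i-cond}), so $\mu_n^h=\Delta^2 f$ there. The integrated total-mass bound is obtained by testing $(\ast)$ with the admissible choices $w=f$ and $w=g$, splitting the resulting identities according to the support structure above, controlling the $\Omega_0$-contribution by $T\|\Delta^2 f\|_{L^\infty(\Omega_0)}^2$ and absorbing the $\Omega\setminus\Omega_0$-contribution into $C$ via the energy estimate; weak-$\ast$ lower semicontinuity lets the bound pass to the limit $h\to 0$.

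\emph{Regularity and main obstacle.} For $u\in L^2(\R_+;W^{2,\infty}(\Omega))$: the identity $\Delta^2 u=\mu_t-\partial_t u$ combined with the Lewy--Stampacchia-type structure of $\mu_t$ (supported on contact sets where $u$ agrees with the $C^4$ obstacles) reduces the a.e.\ pointwise regularity to comparison with biharmonic solutions driven by bounded right-hand sides, giving a $W^{2,\infty}$-bound whose square is integrable in $t$ by the $L^2$-bound on $\partial_t u$. The Hölder-in-time statements then follow by interpolation between $L^\infty(H^2_0)$ and $H^1(L^2)$ together with Sobolev embedding in dimension $N\le 3$. I expect the technical heart of the argument to be the total-variation bound on $\mu_t$: disentangling the one-sided Euler--Lagrange structure, isolating the $\Delta^2 f$ contribution on $\Omega_0$, and ensuring that the discrete mass estimates survive the passage $h\to 0^+$ as genuine estimates on a signed measure (rather than merely on a distribution) requires the most delicate bookkeeping.
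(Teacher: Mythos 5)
Your overall strategy (minimizing movements, variational inequality at each step, energy-dissipation telescoping, weak compactness, uniqueness via the standard monotonicity argument) coincides with the paper's framework. However there are two genuine gaps in the parts you flag — correctly — as the technical heart of the proof.

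\emph{First gap: you do not regularize away the coincidence set $\Omega_0$.} The paper does not argue directly from the variational inequality for the original two-obstacle problem; it first replaces $f$ by $f_\ve=f-\ve$ so that the two obstacles are \emph{uniformly separated} ($f_\ve<g$ on all of $\con{\Omega}$), then passes $\ve\downarrow 0$. This is not a cosmetic step. With the original obstacles, the lower and upper contact sets $\{u_{i,n}=f\}$ and $\{u_{i,n}=g\}$ are not a priori disjoint — they can both accumulate on $\pd\Omega_0$ — and then your decomposition of $\mu^h_n$ into a nonnegative piece supported where $u=f$ and a nonpositive piece supported where $u=g$ does not yield two \emph{separated} one-signed distributions to which Schwartz's theorem can be applied term by term. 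In the paper, once $f_\ve<g$ strictly, Sobolev embedding (here is where $N\le 3$ is crucial — you never explain why the dimension restriction appears) gives a modulus of continuity for $u^\ve_{i,n}$ uniform in $\ve,i,n$, which forces the two contact sets to be disjoint; the signed-measure structure is then obtained for each fixed $\ve$ (via a further penalization $\gm_\rho$, which produces smooth approximants $w^{\ve,\rho}_{i,n}$ satisfying a \emph{classical} PDE, so that $\mu^{\ve,\rho}_{i,n}$ is a genuine $L^1$ function of definite sign on each penalized region), and finally one passes $\rho\downarrow0$ and $\ve\downarrow0$. Your identification $u_n^h\equiv f$ on $\Omega_0$ is correct, but it does not by itself control the measure on a neighborhood of $\pd\Omega_0$, which is exactly where the delicacy lies.

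\emph{Second gap: the $W^{2,\infty}$ regularity does not reduce to ``biharmonic with bounded right-hand side.''} The equation is $\Delta^2 u = \mu_t - \pd_t u$, and $\mu_t$ is a \emph{measure}, typically singular with respect to Lebesgue measure on the free boundary; it is not in $L^\infty$, so elliptic regularity for bounded sources does not apply. The correct argument (Lemmas \ref{version-prop}--\ref{D2-bdd-lem-1} and Theorem \ref{C-1-1_u-in} in the paper) proceeds by: constructing a precise upper/lower semicontinuous representative of $\Delta u^\ve_{i,n}+\Delta^{-1}V^\ve_{i,n}$ via monotone spherical averages and the Green's function of the ball; comparing that representative to $\Delta f$ and $\Delta g$ on the respective contact sets (Lemma \ref{version-est}); transferring these one-sided bounds into a uniform $L^\infty$ bound on $\Delta u^\ve_{i,n}$ via a maximum-principle argument for the Newtonian potential of $\mu^\ve_{i,n}$ (Lemma \ref{Lap-bdd}, following Landkof); then invoking the Caffarelli--Friedman--Torelli quadratic decay lemma (Lemma \ref{CFT-lem}) for the nonnegative functions $u^\ve_{i,n}-f$ and $g-u^\ve_{i,n}$ near their zero sets, combined with interior elliptic estimates away from the contact set and second difference quotients, to conclude $D^2 u^\ve_{i,n}\in L^\infty$ with $\ve,n,i$-uniform bounds. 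None of this machinery is present in, or recoverable from, your sketch, and without it the claimed $L^2(\R_+;W^{2,\infty})$ bound — and hence the Hölder continuity statements — is unsubstantiated. I would also note that your derivation of the total-mass bound ``by testing with $w=f$ and $w=g$'' does not obviously yield the stated estimate: such a test gives an inequality of the form $\int_\Omega (f-u)\,d\mu^h_n\ge 0$, which is sign-consistent but contributes no quantitative bound on $|\mu^h_n|(\Omega)$; the paper instead obtains the mass bound by pairing the penalized measure against a fixed cutoff $\vp$, Cauchy--Schwarz, and the observation (from condition \eqref{fg-cond-2}) that the contact sets stay a positive distance from $\pd\Omega$.
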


The restriction on the dimension $N\le 3$ in Theorem \ref{main-thm} has two motivations.
The first is related to the continuity of the approximate solutions. 
We construct the solution of \eqref{P} as a suitable limit of solutions of the 
obstacle problem 
for the corresponding elliptic equation, 
which is a biharmonic equation with a lower order perturbation. 
Here a difficulty arises from the presence of the set $\Omega_{0}$. 
To overcome this difficulty, first we construct the solution of the two obstacle problem replaced $f$ with $f - \ve$, for $\ve>0$. 
If the solution $u_{\ve}$ of the modified two obstacle problem is uniformly continuos with respect to $\ve$ in $\Omega$, 
then one can obtain a solution of the original obstacle problem as a limit of $u_{\ve}$ as $\ve \downarrow 0$. 
Thus the point is to obtain the uniform continuity of $u_{\ve}$,
and this is given by Sobolev's embedding if $N\le 3$. 
For the same reason, the two obstacle problem for the elliptic biharmonic equation was studied in \cite{CFT} 
under the same assumption $N \le 3$.

Even if $\Omega_0=\emptyset$, we still need the restriction on the dimension
in order to prove the $C^{1,1}$ regularity of the approximate solutions. 
Here the difficulty proving the continuity of the discrete velocities, 
which converge to $\pd_{t} u$. Again, such continuity can be obtained from 
Sobolev's embedding if $N\le 3$. 

We note that Theorem \ref{main-thm} can be extended to the problem \eqref{P} replaced Neumann boundary condition by Navier boundary 
condition, i.e., $u = \Delta u = 0$ on $\pd \Omega$. 
Indeed, replacing $H^{2}_{0}(\Omega)$ by $H^{2}(\Omega) \cap H^{1}_{0}(\Omega)$, 
we onbain the same conclusion as Theorem \ref{main-thm}

\smallskip

The paper is organized as follows: We shall construct the solution of \eqref{P} by way of an implicit time discretization so called 
minimizing movements, which was given by De Giorgi (e.g., see \cite{A}). 
We give a formulation via minimizing movement in Section \ref{setting}. 
In Section \ref{exist-appro-sol}, we construct an approximate solution of the problem \eqref{P} and investigate its regularity. 
In Section \ref{proof-of-main-theorem}, we prove Theorem \ref{main-thm}. 
Indeed, we first prove that the approximate solution converges to a function in a suitable sense. 
And then we observe that the limit is the required solution of \eqref{P}. 


\section{Notation} \label{setting}
We first note that the problem \eqref{P} is the $L^2$-gradient flow for the functional 
\begin{align}
E(u):= \dfrac{1}{2} \int_{\Omega} \av{\Delta u(x)}^2 \, dx 
\end{align}
with constraint $u \in \mK$.  
Let $T>0$ and $n \in \N$, and set $\tau_n = T/n$. We define a sequence $\{ u_{i,n} \}^{n}_{i=0}$ inductively. 
To begin with, we let $u_{0,n} := u_0$. 
Let us denote by $u_{i,n}$ the minimizer of the problem 
\begin{align} \label{mini_in}
\min \{\, G_{i,n}(u) \mid u \in K \,\}  \tag{$M_{i,n}$}
\end{align}
with 
\begin{align} \label{G_in}
G_{i,n}(u):= E(u) + P_{i,n}(u)
\end{align}
where 
\begin{align}
P_{i,n}(u):= \dfrac{1}{2 \tau_n} \int_{\Omega} [ u(x) - u_{i-1,n}(x) ] ^2\, dx. 
\end{align}
The set $K$ is given by 
\begin{align}
K = \{\, u \in H^2_0(\Omega) \mid f \le u \le g \,\,\, \text{in} \,\,\, \Omega \,\}. 
\end{align}
Let us set 
\begin{align}
V_{i,n}(x)= \dfrac{u_{i,n}(x) - u_{i-1,n}(x)}{\tau_n}. 
\end{align}

\begin{definition} \label{piece-linear}
Let us define $u_n(x,t) : \Omega \times [\, 0,T \,] \to \R$ as 
\begin{align}
u_n(x,t)= u_{i-1,n}(x) + (t - (i-1) \tau_n) V_{i,n}(x) 
\end{align}
in $\Omega \times [\, (i-1) \tau_n, i \tau_n \,]$ for each $i=1$, $2$, $\cdots$, $n$. 
\end{definition}

\begin{definition} \label{piece-constant}
Let us define $\tilde{u}_n(x,t) : \Omega \times (\, 0,T \,] \to \R$ and $V_n(x,t) : \Omega \times (\, 0,T \,] \to \R$ as 
\begin{align}
\tilde{u}_n(x,t)&= u_{i,n}(x), \\
V_n(x,t)&= V_{i,n}(x), 
\end{align}
in $\Omega \times (\, (i-1) \tau_n, i \tau_n \,]$ for each $i=1$, $2$, $\cdots$, $n$. 
\end{definition}


\section{Existence of approximate solution} \label{exist-appro-sol}

To begin with, we show the existence of the solution of \eqref{mini_in}. 

\begin{thm} \label{exist-mini}
Let $f$ and $g$ satisfy \eqref{fg-cond-1}-\eqref{fg-cond-2}.  Let $u_0$ satisfy \eqref{i-cond}. 
Then there exists a unique minimizer of \eqref{mini_in}. 
\end{thm}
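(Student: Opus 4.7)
The plan is to apply the direct method of the calculus of variations. First I would verify that $K$ is non-empty by noting that $u_{i-1,n}\in K$ (by induction starting from $u_0$, which is in $K$ by \eqref{i-cond}), so in particular $G_{i,n}(u_{i-1,n}) = E(u_{i-1,n}) < \infty$, and hence $\inf_{K} G_{i,n}$ is finite. Next, $G_{i,n}$ is trivially bounded below by $0$, so we may pick a minimizing sequence $\{v_k\}\subset K$. Since $E(v_k) = \tfrac12\Ln{\Delta v_k}{\Omega}^2$ is bounded and $v_k\in H^2_0(\Omega)$, the norm $\Ln{\Delta v_k}{\Omega}$ controls the full $H^2(\Omega)$-norm via the Poincaré inequality together with elliptic regularity for the Laplacian with zero Dirichlet data, so $\{v_k\}$ is bounded in $H^2_0(\Omega)$.

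I would then extract a subsequence converging weakly in $H^2_0(\Omega)$ to some $u_{i,n}$, and, by Rellich's theorem, strongly in $L^2(\Omega)$ and a.e. along a further subsequence. The constraints $f\le v_k\le g$ and the boundary conditions encoded in $H^2_0(\Omega)$ are preserved in the limit (the first by a.e. convergence, the second by weak closedness of $H^2_0$), so $u_{i,n}\in K$. For lower semicontinuity, $E$ is continuous and convex on $H^2_0(\Omega)$, hence weakly lower semicontinuous, while $P_{i,n}$ is continuous on $L^2(\Omega)$ and passes to the limit under strong $L^2$-convergence. Therefore
\begin{align*}
G_{i,n}(u_{i,n}) \le \liminf_{k\to\infty} G_{i,n}(v_k) = \inf_{K} G_{i,n},
\end{align*}
and $u_{i,n}$ is a minimizer.

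For uniqueness, observe that $K$ is convex, $E$ is convex on $H^2_0(\Omega)$, and $P_{i,n}$ is strictly convex on $L^2(\Omega)$ (it is a nondegenerate quadratic). Thus $G_{i,n}$ is strictly convex on the convex set $K$, which rules out the existence of two distinct minimizers. No step presents any real obstacle here; the only mildly delicate point is verifying that the constraint set $K$ is sequentially weakly closed in $H^2_0(\Omega)$, which is handled by passing to an a.e.-convergent subsequence as above.
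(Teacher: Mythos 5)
Your proof is correct and follows essentially the same approach as the paper: the direct method in the calculus of variations, using boundedness from the $\|\Delta \cdot\|_{L^2}$-norm equivalence on $H^2_0(\Omega)$, weak lower semicontinuity of $E$, and strict convexity of $G_{i,n}$ for uniqueness. The only minor variation is that you preserve the pointwise constraint via Rellich's compact embedding and a.e.~convergence, while the paper invokes uniform convergence from the Sobolev embedding (available since $N\le 3$), and you pass to the limit in $P_{i,n}$ by strong $L^2$-convergence where the paper uses Fatou; both routes are valid.
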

\begin{proof}
Let $\{ u_{j} \} \subset K$ be a minimizing sequence for the functional \eqref{G_in}. 
Since 
\begin{align*}
0 \le \inf_{K} G_{i,n}(u) \le G_{i,n}(u_{i-1,n}) = E(u_{i-1,n}),  
\end{align*}
we may assume $\{ u_j \}$ that $\sup_{j \in \N} G_{i,n}(u_j) < \infty$. 
Recalling that $\| \Delta v \|_{L^2(\Omega)}$ is equivalent to $\| v \|_{H^2_0(\Omega)}$ 
on $H^2_0(\Omega)$,
we deduce that $\{ u_j \}$ is uniformly bounded in $H^2_0(\Omega)$, 
and then there exists $u \in H^{2}_{0}(\Omega)$ such that 
\begin{align} \label{weak-conv-311}
u_j \rightharpoonup u \quad \text{in} \quad H^2(\Omega), 
\end{align}
in particular,  
\begin{align} \label{weak-conv-312}
\Delta u_j \rightharpoonup \Delta u \quad \text{in} \quad L^2(\Omega), 
\end{align}
up to a subsequence. 
Since \eqref{weak-conv-311} implies that $u_j$ uniformly converges to $u$ in $\Omega$ up to a subsequence, 
we have $f \le u \le g$ in $\Omega$. 
It follows from Fatou's Lemma that 
\begin{align*}
P_{i,n}(u) \le \liminf_{j \to \infty} P_{i,n}(u_j). 
\end{align*}
Moreover we infer from \eqref{weak-conv-312} that 
\begin{align*}
E(u) \le \liminf_{j \to \infty} E(u_j). 
\end{align*}
The uniqueness of the minimizer of \eqref{mini_in} follows from the convexity of $G_{i,n}$. 
\end{proof}

Set 
\begin{align}
f_\ve(x) = f(x) - \ve. 
\end{align}
We denote by $(M^{\ve}_{i,n})$ the problem \eqref{mini_in} replaced $f$ by $f_\ve$. 
The proof of Theorem \ref{exist-mini} implies that the problem $(M^{\ve}_{i,n})$ has a unique minimizer $u^{\ve}_{i,n}$. 
From now on, let us set 
\begin{align}
V^{\ve}_{i,n} = \dfrac{u^{\ve}_{i,n} - u^{\ve}_{i-1,n}}{\tau_n}. 
\end{align}
Moreover let $V^{\ve}_{n}$ denote the piecewise constant interpolation of $V^{\ve}_{i,n}$. 

\begin{lem} \label{uniform-conv-1}
$u^{\ve}_{i,n}$ uniformly converges to $u_{i,n}$ in $\Omega$ as $\ve \to 0$. 
\end{lem}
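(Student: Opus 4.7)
The plan is to prove the convergence by induction on $i$, using the Sobolev embedding $H^{2}(\Omega) \hookrightarrow C(\con{\Omega})$ (compact for $N \le 3$) together with the variational characterization of the minimizers.

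The base case $i=0$ is immediate since $u^{\ve}_{0,n}=u_{0}=u_{0,n}$. For the inductive step, I would assume that $u^{\ve}_{i-1,n} \to u_{i-1,n}$ uniformly (in particular in $L^{2}(\Omega)$) as $\ve \downarrow 0$. The key observation is that the admissible set for the perturbed problem $K^{\ve}:=\{v \in H^{2}_{0}(\Omega) : f-\ve \le v \le g\}$ contains the original admissible set $K$, so $u_{i,n}$ itself is a valid competitor for $(M^{\ve}_{i,n})$. Hence, writing $G^{\ve}_{i,n}$ for the functional with $u^{\ve}_{i-1,n}$ in place of $u_{i-1,n}$, minimality gives
\begin{align*}
E(u^{\ve}_{i,n}) + \frac{1}{2\tau_{n}} \Ln{u^{\ve}_{i,n} - u^{\ve}_{i-1,n}}{\Omega}^{2}
\le E(u_{i,n}) + \frac{1}{2\tau_{n}} \Ln{u_{i,n} - u^{\ve}_{i-1,n}}{\Omega}^{2}.
\end{align*}
Since the right-hand side is bounded uniformly in $\ve$ by the inductive hypothesis, and $\Ln{\Delta(\cdot)}{\Omega}$ is equivalent to the $H^{2}_{0}$-norm, this provides a uniform $H^{2}_{0}$-bound on $u^{\ve}_{i,n}$.

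Next I would extract, along any sequence $\ve_{k} \downarrow 0$, a subsequence with $u^{\ve_{k}}_{i,n} \rightharpoonup u^{\ast}$ in $H^{2}_{0}(\Omega)$. Because $N \le 3$, the embedding $H^{2}(\Omega) \hookrightarrow C(\con{\Omega})$ is compact, so the convergence is uniform in $\con{\Omega}$. The obstacle bounds $f-\ve_{k} \le u^{\ve_{k}}_{i,n} \le g$ pass to the limit to yield $u^{\ast} \in K$. To identify $u^{\ast}=u_{i,n}$, pick any $w \in K \subset K^{\ve_{k}}$ and use $w$ as a test function in $(M^{\ve_{k}}_{i,n})$:
\begin{align*}
E(u^{\ve_{k}}_{i,n}) + \frac{1}{2\tau_{n}} \Ln{u^{\ve_{k}}_{i,n} - u^{\ve_{k}}_{i-1,n}}{\Omega}^{2}
\le E(w) + \frac{1}{2\tau_{n}} \Ln{w - u^{\ve_{k}}_{i-1,n}}{\Omega}^{2}.
\end{align*}
The left-hand side is lower semicontinuous with respect to weak $H^{2}$ convergence (for the $E$ term) and uniform convergence (for the $P$ term, using the inductive hypothesis on $u^{\ve_{k}}_{i-1,n}$), while the right-hand side converges by continuity. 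This yields $G_{i,n}(u^{\ast}) \le G_{i,n}(w)$ for every $w \in K$, and by the uniqueness established in Theorem \ref{exist-mini} we conclude $u^{\ast}=u_{i,n}$. Since every subsequence has a further subsequence converging uniformly to the same limit $u_{i,n}$, the whole family $u^{\ve}_{i,n}$ converges uniformly to $u_{i,n}$, closing the induction.

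The only delicate point is propagating the inductive hypothesis forward: the functional $P_{i,n}$ depends on $u^{\ve}_{i-1,n}$, so one must carefully track that both the competitor bound and the passage to the limit remain compatible with the previous step's convergence. This is where uniform convergence of $u^{\ve}_{i-1,n}$ (not just $L^{2}$) is convenient, since it keeps the $P_{i,n}$ term well-controlled along the whole induction, and it is precisely at this point that the dimensional restriction $N \le 3$ is used through the compact Sobolev embedding.
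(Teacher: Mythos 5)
Your proof is correct and follows essentially the same route as the paper: extract a weakly convergent subsequence in $H^2_0(\Omega)$, use the Sobolev embedding for $N\le 3$ to upgrade to uniform convergence, exploit the inclusion $K\subset K_\ve$, and conclude by uniqueness. The one stylistic difference is that you identify the limit by passing the energy inequality $G^\ve_{i,n}(u^\ve_{i,n})\le G^\ve_{i,n}(w)$ to the limit directly, whereas the paper passes the Euler--Lagrange variational inequality to the limit; for a convex functional these are interchangeable. Your explicit induction on $i$ is a welcome clarification: the paper silently replaces $V^{\ve_{m'}}_{i,n}$ by $\bar V_{i,n}$ in the limit, which does tacitly require that $u^{\ve}_{i-1,n}\to u_{i-1,n}$ from the previous step, so making the induction explicit is the honest way to present the argument.
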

\begin{proof}
By the fact that $\| u^{\ve}_{i,n} \|_{H^2(\Omega)} \le C$, for any sequence $\{ \ve_m \}$ with $\ve_m \to 0$ as $m \to \infty$, 
there exist $\{ \ve_{m'} \} \subset \{ \ve_m \}$ 
and $\bar{u}_{i,n} \in H^2_0(\Omega)$ such that 
\begin{align} \label{weak-u-ve-1}
u^{\ve_{m'}}_{i,n} \rightharpoonup \bar{u}_{i,n} \quad \text{weakly in} \quad H^2(\Omega) \quad \text{as} \quad m' \to \infty,  
\end{align}
in particular, 
\begin{align} \label{weak-u-ve-2}
\Delta u^{\ve_{m'}}_{i,n} \rightharpoonup \Delta \bar{u}_{i,n} 
\quad \text{weakly in} \quad L^2(\Omega) \quad \text{as} \quad m' \to \infty. 
\end{align}
Since $N \le 3$, Sobolev's embedding theorem implies that $u^{\ve_{m'}}_{i,n}$ uniformly converges to $\bar{u}_{i,n}$ as 
$\ve \downarrow 0$. 
Recalling that the solution $u^{\ve_{m'}}_{i,n}$ of $(M^{\ve_{m'}}_{i,n})$ satisfies 
\begin{align*} 
\int_\Omega \left[ \Delta u^{\ve_{m'}}_{i,n} \Delta (w-u^{\ve_{m'}}_{i,n}) + V^{\ve_{m'}}_{i,n} (w-u^{\ve_{m'}}_{i,n}) \right] \, dx \ge 0 
\quad \text{for any} \quad w \in K_{\ve_{m'}}, 
\end{align*}
we deduce from \eqref{weak-u-ve-1}-\eqref{weak-u-ve-2} that    
\begin{align*}
&\int_\Omega \left[ \Delta \bar{u}_{i,n} \Delta (w-\bar{u}_{i,n}) + \bar{V}_{i,n} (w-\bar{u}_{i,n}) \right] \, dx \\
& \quad \ge \liminf_{m' \to \infty} 
    \int_\Omega \left[ \Delta u^{\ve_{m'}}_{i,n} \Delta (w-u^{\ve_{m'}}_{i,n}) + V^{\ve_{m'}}_{i,n} (w-u^{\ve_{m'}}_{i,n}) \right] \, dx
\ge 0 
\quad \text{for any} \quad w \in K,  
\end{align*}
where we used the fact $K \subset K_{\ve_{m'}}$. 
Moreover it follows from the uniqueness of the solution of \eqref{mini_in} that $\bar{u}_{i,n}= u_{i,n}$. 
\end{proof}

Along the same lines as in the proof of Theorem 2.2  in \cite{NO}, we obtain the following uniform estimates: 
\begin{prop} \label{est-mini}
Let $u^{\ve}_{i,n}$ be the solution of $(M^{\ve}_{i,n})$. Then, for any $n \in \N$, 
\begin{gather}
\int^T_0 \!\!\! \int_\Omega V^{\ve}_{n}(x,t)^2 \, dxdt \le 2 E(u_0), \label{bdd-V_n} \\
\sup_i \| \Delta u^{\ve}_{i,n} \|_{L^2(\Omega)}^2 \le 2 E(u_0). \label{bdd-Delta-u_in}
\end{gather}
\end{prop}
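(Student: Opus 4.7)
The plan is to exploit the fact that $u^{\ve}_{i,n}$ is a minimizer of $G_{i,n}$ over $K_\ve = \{u \in H^2_0(\Omega) \mid f_\ve \le u \le g\}$, and to use $u^{\ve}_{i-1,n}$ itself as a competitor. Since $u^{\ve}_{i-1,n} \in K_\ve$ (indeed $f_\ve = f-\ve < f \le u^{\ve}_{i-1,n} \le g$), and since $P_{i,n}(u^{\ve}_{i-1,n})=0$, the minimality inequality $G_{i,n}(u^{\ve}_{i,n}) \le G_{i,n}(u^{\ve}_{i-1,n})$ reduces immediately to
\begin{align*}
E(u^{\ve}_{i,n}) + \dfrac{1}{2\tau_n} \int_{\Omega} (u^{\ve}_{i,n} - u^{\ve}_{i-1,n})^2 \, dx \le E(u^{\ve}_{i-1,n}).
\end{align*}

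Rewriting the second term using the definition of $V^{\ve}_{i,n}$ gives
\begin{align*}
E(u^{\ve}_{i,n}) + \dfrac{\tau_n}{2} \int_{\Omega} (V^{\ve}_{i,n})^2 \, dx \le E(u^{\ve}_{i-1,n}).
\end{align*}
Iterating this inequality from $i=1$ up to an arbitrary index $k \le n$, and using $u^{\ve}_{0,n}=u_0$, I obtain
\begin{align*}
E(u^{\ve}_{k,n}) + \dfrac{\tau_n}{2} \sum_{i=1}^{k} \int_{\Omega} (V^{\ve}_{i,n})^2 \, dx \le E(u_0).
\end{align*}

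The two claimed bounds follow at once. Taking $k=n$ and discarding the nonnegative first term yields $\tau_n \sum_{i=1}^{n} \int_\Omega (V^{\ve}_{i,n})^2 \, dx \le 2 E(u_0)$, which by definition of the piecewise constant interpolation $V^{\ve}_n$ is exactly the estimate \eqref{bdd-V_n}. For \eqref{bdd-Delta-u_in}, I discard the sum instead and obtain $E(u^{\ve}_{i,n}) \le E(u_0)$ for every $i$, which is precisely $\|\Delta u^{\ve}_{i,n}\|^2_{L^2(\Omega)} \le 2 E(u_0)$.

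There is no real obstacle: the only point that requires a moment's thought is the admissibility of $u^{\ve}_{i-1,n}$ as a competitor in $(M^{\ve}_{i,n})$, which is automatic because passing from $f$ to $f_\ve=f-\ve$ only enlarges the lower obstacle set, so any previous minimizer remains admissible at the next step. Once this is checked, the argument is a routine telescoping of the discrete energy identity and follows the template of Theorem 2.2 in \cite{NO}.
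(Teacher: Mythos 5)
Your proof is correct and is precisely the telescoping energy-dissipation argument that the paper invokes by referring to Theorem 2.2 of \cite{NO}, so the approach matches. One small imprecision: the parenthetical chain $f_\ve < f \le u^{\ve}_{i-1,n}$ is only guaranteed for $i=1$ (where $u^{\ve}_{0,n}=u_0\in K$); for $i\ge 2$ the admissibility of $u^{\ve}_{i-1,n}$ in $(M^{\ve}_{i,n})$ follows simply from $u^{\ve}_{i-1,n}\in K_\ve$ as the minimizer of the previous step, which is what your surrounding sentence correctly says.
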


Since $N \le 3$, combining Proposition \ref{est-mini} with Sobolev's embedding theorem, we have 
\begin{align} \label{uni-conti-in-ve}
&\text{$u^{\ve}_{i,n}$ is uniformly continuous in $\Omega$, with modulus of continuity} \\
&\text{independent of $\ve$, $i$, and $n$.} \notag
\end{align}
Set 
\begin{align*}
\mC^{\ve, +}_{i,n} &= \{\, x \in \Omega \mid u^{\ve}_{i,n}(x)= f_\ve(x) \,\}, \\
\mC^{\ve, -}_{i,n} &= \{\, x \in \Omega \mid u^{\ve}_{i,n}(x)= g(x) \,\}. 
\end{align*}
By the fact that $f_\ve < g$ in $\Omega$, we observe from \eqref{uni-conti-in-ve} that 
the sets $\mC^{\ve,+}_{i,n}$ and $\mC^{\ve,-}_{i,n}$ are disjoint. Here we set   
\begin{align*}
\mu^{\ve}_{i,n}= \Delta^2 u^{\ve}_{i,n} + V^{\ve}_{i,n}. 
\end{align*}
In the following, we show that $\mu^{\ve}_{i,n}$ is a signed measure in $\Omega$. 
To this aim, let us define 
\begin{align*}
\gm_\rho(\lm)&:=
\begin{cases}
\dfrac{\lm^2}{\rho} \quad & \text{if} \quad \lm <0, \\
0 \quad & \text{if} \quad \lm >0,
\end{cases}\\
\vb_\rho(\lm) &:= \gm'_\rho(\lm),  
\end{align*}
for each $\rho>0$. Regarding the following minimization problem 
\begin{align} \label{mini-in-ve-rho}
\min_{v \in H^2_0(\Omega)} G^{\ve,\rho}_{i,n}(v)  \tag{$M^{\ve,\rho}_{i,n}$}
\end{align}
with 
\begin{align*}
G^{\ve, \rho}_{i,n}(v) := \int_\Omega \Bigm[ \dfrac{1}{2}(\Delta v)^2 + \dfrac{1}{2 \tau_n} (v-u^{\ve}_{i-1,n})^2 
                                                                  + \gm_\rho(v-f_\ve) + \gm_\rho(g-v) \Bigm] \, dx, 
\end{align*}
we show the following: 
\begin{prop} \label{appro-u-in-ve}
The problem \eqref{mini-in-ve-rho} has a unique solution $w^{\ve,\rho}_{i,n}$ with 
\begin{align}
w^{\ve,\rho}_{i,n} \rightharpoonup u^{\ve}_{i,n} \quad \text{weakly in} \quad H^2(\Omega) 
\quad \text{as} \quad \rho \downarrow 0. 
\end{align}
\end{prop}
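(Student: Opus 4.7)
The plan is to apply the direct method of the calculus of variations for existence and uniqueness of $w^{\ve,\rho}_{i,n}$, and then to identify its weak limit via a minimization comparison, avoiding any Euler--Lagrange calculation. For existence I would argue exactly as in Theorem \ref{exist-mini}. Each summand of $G^{\ve,\rho}_{i,n}$ is convex (the quadratic terms obviously, and $\gm_\rho(\lm) = \lm_-^2/\rho$ is a $C^1$ convex function of $\lm$), the functional is coercive on $H^2_0(\Omega)$ since $\Ln{\Delta v}{\Omega}$ controls the $H^2_0$-norm, and it is weakly lower semicontinuous (convexity of the quadratic terms together with continuity of $\gm_\rho$ and Fatou's lemma). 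A minimizer exists, and strict convexity of $E$ yields uniqueness.

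For the convergence statement, the key estimate comes from testing minimality against $u^{\ve}_{i,n}$ itself, which lies in $K_\ve$ and therefore annihilates the penalty:
\begin{align*}
G^{\ve,\rho}_{i,n}(w^{\ve,\rho}_{i,n}) \le G^{\ve,\rho}_{i,n}(u^{\ve}_{i,n}) = G^{\ve}_{i,n}(u^{\ve}_{i,n}) \le E(u_0),
\end{align*}
using Proposition \ref{est-mini}. This yields simultaneously a uniform $H^2_0$-bound on $\{ w^{\ve,\rho}_{i,n} \}_\rho$ and the penalty bound
\begin{align*}
\int_\Omega \left[ (w^{\ve,\rho}_{i,n} - f_\ve)_-^2 + (g - w^{\ve,\rho}_{i,n})_-^2 \right] dx \le C \rho.
\end{align*}
Extracting a subsequence $\rho_k \downarrow 0$ with $w^{\ve,\rho_k}_{i,n} \rightharpoonup \bar w$ weakly in $H^2(\Omega)$, the compact embedding into $L^2(\Omega)$ together with the penalty bound force $f_\ve \le \bar w \le g$ a.e., so $\bar w \in K_\ve$.

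Finally, for any admissible $\vp$ with $f_\ve \le \vp \le g$ the penalty vanishes at $\vp$, so the minimality of $w^{\ve,\rho_k}_{i,n}$ gives
\begin{align*}
G^{\ve}_{i,n}(w^{\ve,\rho_k}_{i,n}) \le G^{\ve,\rho_k}_{i,n}(w^{\ve,\rho_k}_{i,n}) \le G^{\ve,\rho_k}_{i,n}(\vp) = G^{\ve}_{i,n}(\vp).
\end{align*}
Weak lower semicontinuity of $G^{\ve}_{i,n}$ then yields $G^{\ve}_{i,n}(\bar w) \le G^{\ve}_{i,n}(\vp)$ for every such $\vp$, so $\bar w$ minimizes $G^{\ve}_{i,n}$ over $K_\ve$ and must equal $u^{\ve}_{i,n}$ by the uniqueness part of Theorem \ref{exist-mini}. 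Since the limit is independent of the subsequence, the whole family converges. The only delicate point is the penalty estimate $C\rho \to 0$, which traps the weak limit in the admissible set without any recourse to an Euler--Lagrange analysis of $G^{\ve,\rho}_{i,n}$; everything else is standard direct-method machinery.
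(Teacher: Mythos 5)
Your proof is correct and follows essentially the same route as the paper's: direct method for existence and uniqueness, a minimality comparison yielding uniform $H^2$ and penalty bounds, weak compactness to extract a limit, constraint identification from the $O(\rho)$ penalty bound, and the inequality chain $G^{\ve}_{i,n}(\cdot)\le G^{\ve,\rho}_{i,n}(\cdot)$ together with weak lower semicontinuity to identify the limit as the unique minimizer. The only cosmetic difference is that you compare against $u^{\ve}_{i,n}$ rather than $u^{\ve}_{i-1,n}$ and omit stating the Euler--Lagrange equation (which the paper records here but only uses in the proof of Theorem \ref{Radon-1}).
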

\begin{proof}
By a standard argument, we deduce that the problem \eqref{mini-in-ve-rho} has a unique solution $w^{\ve,\rho}_{i,n}$ satisfying 
\begin{align*}
\Delta^{2} w^{\ve,\rho}_{i,n} + \dfrac{1}{\tau_{n}} (w^{\ve,\rho}_{i,n} - u^{\ve}_{i-1,n}) 
  + \vb_{\rho}(w^{\ve,\rho}_{i,n} - f_{\ve}) - \vb_{\rho}(g - w^{\ve,\rho}_{i,n}) = 0 \quad \text{in} \quad \Omega
\end{align*}
in the classical sense. Since it follows from the minimality of $w^{\ve,\rho}_{i,n}$ that 
\begin{align} \label{Gw-Eu}
G^{\ve,\rho}_{i,n}(w^{\ve,\rho}_{i,n}) 
\le G^{\ve,\rho}_{i,n}(u^{\ve}_{i-1,n}) 
= E(u^{\ve}_{i-1,n}), 
\end{align}
we observe from Proposition \ref{est-mini} that 
\begin{gather}
\| \Delta w^{\ve,\rho}_{i,n} \|_{L^{2}(\Omega)}^{2} \le 2 E(u_{0}), \label{H2-bdd-w} \\
\dfrac{1}{2 \tau_{n}} \| w^{\ve,\rho}_{i,n} - u^{\ve}_{i-1,n} \|_{L^{2}(\Omega)}^{2} \le E(u_{0}), 
\end{gather}
and 
\begin{align} \label{w-n-part}
\max \{\, \| (w^{\ve,\rho}_{i,n} - f_\ve)^{-} \|_{L^{2}(\Omega)}^{2}, \| (g - w^{\ve,\rho}_{i,n})^{-} \|_{L^{2}(\Omega)}^{2} \,\} 
\le \rho E(u_{0}). 
\end{align}
The inequality \eqref{H2-bdd-w} yields that there exist a sequence $\{ \rho_{m}\}$ with $\rho_{m} \to 0$ as $m \to \infty$ and 
a function $\tilde{u} \in H^{2}_{0}(\Omega)$ such that 
\begin{align} \label{w-weak}
w^{\ve,\rho_{m}}_{i,n} \rightharpoonup \tilde{u} \quad \text{weakly in} \quad H^{2}(\Omega), 
\end{align}
in particular, 
\begin{align} \label{w-ae-conv}
w^{\ve,\rho_{m}}_{i,n} \to \tilde{u} \quad \text{a.e. in}\quad \Omega, 
\end{align}
as $\rho_{m} \to 0$. Recalling \eqref{w-n-part} and \eqref{w-ae-conv}, we deduce from Chebyshev's inequality that 
$f_{\ve} \le \tilde{u} \le g$ in $\Omega$. This implies $\tilde{u} \in K_{\ve}$. 

We claim that $\tilde{u}$ is a minimizer of $(M^{\ve}_{i,n})$. 
Indeed, for any $v \in K_{\ve}$, it holds that 
\begin{align} \label{pre-mini-1}
G^{\ve}_{i,n}(v) = G^{\ve,\rho_{m}}_{i,n}(v) 
\ge G^{\ve,\rho_{m}}_{i,n}(w^{\ve,\rho_{m}}_{i,n}) 
\ge G^{\ve}_{i,n}(w^{\ve,\rho_{m}}_{i,n}). 
\end{align}
Recalling \eqref{w-weak}-\eqref{w-ae-conv} and letting $\rho_{m} \to 0$ in \eqref{pre-mini-1}, we infer that 
\begin{align*} 
G^{\ve}_{i,n}(v) \ge \liminf_{\rho_{m} \downarrow 0} G^{\ve}_{i,n}(w^{\ve,\rho_{m}}_{i,n}) = G^{\ve}_{i,n}(\tilde{u}). 
\end{align*}
This implies that $\tilde{u}$ is a minimizer of $(M^{\ve}_{i,n})$. 
Then it follows from the uniqueness of the solutions to $(M^{\ve}_{i,n})$ that $\tilde{u}= u^{\ve}_{i,n}$. 
We thus completed the proof. 
\end{proof}

\begin{thm} \label{Radon-1}
Let $\ve>0$ and $i \in \{\, 1, 2, \cdots, n \,\}$. 
Then the quantity $\mu^{\ve}_{i,n}$ is a signed measure in $\Omega$ with 
\begin{align} \label{prop-mu-in-ve}
{\rm supp}\, \mu^{\ve}_{i,n} \subset \mC^{\ve,+}_{i,n} \cup \mC^{\ve,-}_{i,n}, \qquad 
\mu^{\ve}_{i,n} 
\begin{cases}
\ge 0 \quad & \text{in} \quad \mC^{\ve,+}_{i,n}, \\
\le 0 \quad & \text{in} \quad \mC^{\ve,-}_{i,n}. 
\end{cases}
\end{align} 
Moreover there exists a positive constant $C>0$ independent of $\ve$ and $n$ such that 
\begin{align} \label{bdd-meas-1}
\tau_n \sum^{n}_{i=1} \mu^{\ve}_{i,n}(\Omega)^2 < C. 
\end{align}
\end{thm}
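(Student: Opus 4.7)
My plan is to derive all claims by passing to the limit $\rho \downarrow 0$ in the penalty approximation of Proposition~\ref{appro-u-in-ve}. With $V^{\ve,\rho}_{i,n}:=(w^{\ve,\rho}_{i,n}-u^{\ve}_{i-1,n})/\tau_n$, the Euler--Lagrange equation identifies
\[
\mu^{\ve,\rho}_{i,n} := \Delta^2 w^{\ve,\rho}_{i,n}+V^{\ve,\rho}_{i,n} = -\vb_\rho(w^{\ve,\rho}_{i,n}-f_\ve)+\vb_\rho(g-w^{\ve,\rho}_{i,n}) =: h^+_\rho - h^-_\rho,
\]
where $h^\pm_\rho\ge 0$ (since $\vb_\rho\le 0$), with $\mathrm{supp}\,h^+_\rho\subset\{w^{\ve,\rho}_{i,n}\le f_\ve\}$ and $\mathrm{supp}\,h^-_\rho\subset\{w^{\ve,\rho}_{i,n}\ge g\}$; these are disjoint by $f_\ve<g$. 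Since $w^{\ve,\rho}_{i,n}\to u^\ve_{i,n}$ uniformly in $\Omega$ as $\rho\downarrow 0$ (Sobolev's embedding as in Lemma~\ref{uniform-conv-1}), for $\rho$ small the supports of $h^\pm_\rho$ lie inside any prescribed neighbourhood of $\mC^{\ve,\pm}_{i,n}$.

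A key preliminary is to show that the contact sets $\mC^{\ve,\pm}_{i,n}$ lie in a common compact $\Omega'\Subset\Omega$ that is independent of $\ve, i, n$. Using the uniform modulus of continuity from \eqref{uni-conti-in-ve}, the zero boundary values of $u^\ve_{i,n}$, and the strict signs $f<0<g$ on $\pd\Omega$ (which persist in a fixed collar by continuity of $f$ and $g$), one obtains a uniform collar neighbourhood of $\pd\Omega$ on which $f_\ve<u^\ve_{i,n}<g$ strictly, so the contact sets are uniformly interior.

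For the signed-measure property and \eqref{prop-mu-in-ve}, I would fix a cutoff $\Psi\in C^\infty_c(\Omega)$ with $\Psi\equiv 1$ near $\mC^{\ve,+}_{i,n}$ and $\Psi\equiv -1$ near $\mC^{\ve,-}_{i,n}$, $|\Psi|\le 1$. Testing the penalty equation against $\Psi$ yields, for $\rho$ small,
\[
\int_\Omega h^+_\rho\,dx+\int_\Omega h^-_\rho\,dx = \int_\Omega \Delta w^{\ve,\rho}_{i,n}\,\Delta\Psi\,dx + \int_\Omega V^{\ve,\rho}_{i,n}\,\Psi\,dx,
\]
uniformly bounded in $\rho$ by \eqref{H2-bdd-w} and the $L^2$ bound on $V^{\ve,\rho}_{i,n}$ from \eqref{Gw-Eu}. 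Weak-$\ast$ compactness then delivers a signed limit measure, identified with $\mu^\ve_{i,n}=\Delta^2 u^\ve_{i,n}+V^\ve_{i,n}$ by Proposition~\ref{appro-u-in-ve}. The support and sign properties follow by testing the distributional limit against $\phi\in C^\infty_c(\Omega)$ supported away from $\mC^{\ve,+}_{i,n}\cup\mC^{\ve,-}_{i,n}$ (to get the support) and against nonnegative $\phi$ concentrated near a single contact set (to get the sign).

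For \eqref{bdd-meas-1}, I would instead test the limit identity $\mu^\ve_{i,n}=\Delta^2 u^\ve_{i,n}+V^\ve_{i,n}$ against a single $\psi_0\in C^\infty_c(\Omega)$ with $\psi_0\equiv 1$ on the uniform $\Omega'$; integration by parts gives
\[
\mu^\ve_{i,n}(\Omega) = \int_\Omega \Delta u^\ve_{i,n}\,\Delta\psi_0\,dx + \int_\Omega V^\ve_{i,n}\,\psi_0\,dx,
\]
so $\mu^\ve_{i,n}(\Omega)^2 \le C_1 + C_2\|V^\ve_{i,n}\|_{L^2(\Omega)}^2$ with $C_1, C_2$ depending only on $\psi_0$ and $E(u_0)$. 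Multiplying by $\tau_n$ and summing, using $\tau_n\sum_i \|V^\ve_{i,n}\|^2_{L^2(\Omega)}\le 2E(u_0)$ from \eqref{bdd-V_n}, yields \eqref{bdd-meas-1}. The main obstacle I foresee is obtaining the uniform interior compact $\Omega'$: this relies on the modulus of continuity from \eqref{uni-conti-in-ve} being genuinely independent of $\ve, i, n$, without which the test function $\psi_0$ could not be chosen uniformly and the bound would degenerate as $\tau_n\downarrow 0$.
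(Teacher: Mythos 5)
Your proposal is correct and follows essentially the same route as the paper: the Euler--Lagrange equation for the penalized functional makes $\mu^{\ve,\rho}_{i,n}$ a signed measure with the decomposition $h^+_\rho-h^-_\rho$, a uniform mass bound obtained by testing against a cutoff lets you pass to the weak-$\ast$ limit, the limit is identified via Proposition~\ref{appro-u-in-ve}, the support and sign properties follow from the uniform convergence $w^{\ve,\rho}_{i,n}\to u^{\ve}_{i,n}$, and the $\ve$- and $n$-independence of the bound in~\eqref{bdd-meas-1} rests exactly on the uniform collar coming from \eqref{fg-cond-2} together with the uniform modulus of continuity~\eqref{uni-conti-in-ve} (so the worry you raise at the end is already resolved by that estimate). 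The only cosmetic differences are that the paper uses two non-negative cutoffs for $\nu^\rho_\pm$ while you use one signed $\Psi$, and that for~\eqref{bdd-meas-1} the paper tracks the bound through the $\rho$-level with a telescoping energy difference whereas you test the limiting identity $\mu^{\ve}_{i,n}=\Delta^2 u^{\ve}_{i,n}+V^{\ve}_{i,n}$ directly against a single uniform $\psi_0$ and then invoke \eqref{bdd-V_n}; both give the same constant.
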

\begin{proof}
To begin with, we shall verify that the quantity 
\begin{align*}
\mu^{\ve,\rho}_{i,n}:= \Delta^2 w^{\ve,\rho}_{i,n} + (w^{\ve,\rho}_{i,n} - u^{\ve}_{i-1,n})/\tau_n
\end{align*}
defines a signed measure in $\Omega$. Let us set 
\begin{align*}
I_{\rho}^{+} = \{\, x \in \Omega \mid w^{\ve,\rho}_{i,n}(x) \le f_\ve(x) \,\}, \quad 
I_{\rho}^{-} = \{\, x \in \Omega \mid w^{\ve,\rho}_{i,n}(x) \ge g(x) \,\}. 
\end{align*}
It follows from $\vb_\rho \le 0$ that 
\begin{align*}
\Delta^2 w^{\ve,\rho}_{i,n} + \dfrac{w^{\ve,\rho}_{i,n} - u^{\ve}_{i-1,n}}{\tau_n} 
 = -\vb_\rho(w^{\ve,\rho}_{i,n} - f_{\ve}) + \vb_\rho(g - w^{\ve,\rho}_{i,n}) 
\begin{cases}
\ge 0 \quad & \text{in} \quad I_{\rho}^{+}, \\
= 0 \quad & \text{in} \quad \Omega \setminus (I_{\rho}^{+} \cup I_{\rho}^{-}), \\
\le 0 \quad & \text{in} \quad I_{\rho}^{-},  \\
\end{cases} 
\end{align*}
i.e., $\mu^{\ve,\rho}_{i,n}$ defines a signed measure in $\Omega$. 

We claim that the measure $\mu^{\ve,\rho}_{i,n}$ converges to $\mu^{\ve}_{i,n}$ as $\rho \downarrow 0$ 
up to a subsequence. Indeed, we shall show that, for each $\ve$, $i$, and $n$, the quantity 
$\mu^{\ve,\rho}_{i,n}(U)$ is uniformly bounded with respect to $\rho$ for any $U \subset \subset \Omega$.   
From now on, we write $\mu^{\ve,\rho}_{i,n}= \nu^{\rho}_+ - \nu^{\rho}_-$, where $\nu^{\rho}_{\pm}$ are positive 
measures with their support in $I^{\pm}_{\rho}$, respectively. 
For any $\vp \in C^\infty_c(\Omega)$ with $\vp \equiv 1$ in $U$ and $0 \le \vp \le 1$ elsewhere, we observe that 
\begin{align*}
\nu^{\rho}_{\pm}(U)  
&\le \int_{U} \vp d \nu^{\rho}_{\pm} 
= \pm \int_{U} 
       \Bigm[ \Delta w^{\ve,\rho}_{i,n} \Delta \vp + \dfrac{1}{\tau_n}(w^{\ve,\rho}_{i,n} - u^{\ve}_{i-1,n}) \vp \Bigm]\, dx \\
& \le E(w^{\ve,\rho}_{i,n})^{{\frac{1}{2}}} E(\vp)^{\frac{1}{2}} 
 + \sqrt{\dfrac{2}{\tau_n}} 
       \left( \dfrac{1}{2 \tau_n} \int_\Omega (w^{\ve,\rho}_{i,n} - u^{\ve}_{i-1,n})^2 \, dx \right)^{\frac{1}{2}}
       \left( \int_\Omega \vp^2 \, dx \right)^{\frac{1}{2}}. 
\end{align*}
Since it follows from \eqref{Gw-Eu} that 
\begin{align} \label{P-E-diff}
\dfrac{1}{2 \tau_n} \int_\Omega (w^{\ve,\rho}_{i,n} - u^{\ve}_{i-1,n})^2 \, dx 
  \le E(u^{\ve}_{i-1,n}) - E(w^{\ve, \rho}_{i,n}), 
\end{align}
we observe from \eqref{H2-bdd-w} and \eqref{P-E-diff} that   
\begin{align} \label{pre-measu-bdd-1-1}
\nu^{\rho}_{\pm}(U) 
 \le C(U) \Bigm[ E(u_0)^{\frac{1}{2}} + \Bigm( \dfrac{E(u^{\ve}_{i-1,n}) - E(w^{\ve, \rho}_{i,n})}{\tau_n} \Bigm)^{\frac{1}{2}} \Bigm]. 
\end{align}
Thus there exist a sequence $\{ \rho_{m'} \} \subset \{ \rho_m \}$ and measures $\bar{\mu}_{\pm}$ such that 
\begin{align} \label{nu-to-mu-1}
\nu^{\rho_{m'}}_{\pm} \rightharpoonup \bar{\mu}_{\pm} \quad \text{as} \quad m' \to \infty,  
\end{align}
i.e., for any $\vz \in C_c(\Omega)$ 
\begin{align*}
\int_\Omega \vz d \nu^{\rho_{m'}}_{\pm} \to \int_\Omega \vz d \bar{\mu}_{\pm}  \quad \text{as} \quad m' \to \infty,  
\end{align*}
where $\{ \rho_m \}$ is the sequence obtained in the proof of Proposition \ref{appro-u-in-ve}. 
Since Proposition \ref{appro-u-in-ve} asserts that 
\begin{align*}
\int_\Omega \vz d \nu^{\rho_{m'}}_{\pm} 
& =  \pm \int_\Omega  
     \Bigm[ \Delta w^{\ve,\rho_{m'}}_{i,n} \Delta \vz + \dfrac{1}{\tau_n}(w^{\ve,\rho_{m'}}_{i,n} - u^{\ve}_{i-1,n}) \vz \Bigm]\, dx \\
&\to  \pm \int_\Omega  
     \left[ \Delta u^{\ve}_{i,n} \Delta \vz + V^{\ve}_{i,n} \vz \right]\, dx 
     \quad \text{for any} \quad \vz \in C^2_c(\Omega) \quad \text{as} \quad m' \to \infty, 
\end{align*}
the relation \eqref{nu-to-mu-1} implies $\bar{\mu}_{\pm}= \pm(\Delta^2 u^{\ve}_{i,n} + V^{\ve}_{i,n})$, respectively. 
We claim that 
\begin{align} \label{supp-mu-in-ve}
{\rm supp}\, \bar{\mu}_{+} \subset \mC^{\ve,+}_{i,n}, \qquad {\rm supp}\, \bar{\mu}_{-} \subset \mC^{\ve,-}_{i,n}. 
\end{align}
It is sufficient to show the former relation. 
Let $x_0 \in \Omega \setminus \mC^{\ve,+}_{i,n}$ be chosen arbitrarily. Then there exist a neighborhood $W$ of $x_0$ and a 
constant $\vd>0$ such that 
\begin{align*}
u^{\ve}_{i,n}(x) - f_\ve(x) > \vd \quad \text{in} \quad W \subset \Omega.
\end{align*}
Since $w^{\ve, \rho_{m'}}_{i,n}$ uniformly converges to $u^{\ve}_{i,n}$ as $m' \to \infty$, there exists a constant 
$M>0$ such that for any $m' > M$ 
\begin{align*}
\av{w^{\ve,\rho_{m'}}_{i,n} - u^{\ve}_{i,n}} \le \dfrac{\vd}{2} \quad \text{in} \quad W. 
\end{align*}
Thus we deduce that, for any $m'>M$,  
\begin{align*}
w^{\ve,\rho_{m'}}_{i,n}(x) - f_\ve(x) 
 \ge (u^{\ve}_{i,n} - f_\ve(x)) - \av{w^{\ve,\rho_{m'}}_{i,n} - u^{\ve}_{i,n}} 
 > \dfrac{\vd}{2},  
\end{align*}
i.e., $W \subset \Omega \setminus I^+_{\rho_{m'}}$ for any $m' > M$. Hence we see that for any $\vz \in C^2_c(W)$ 
\begin{align*}
\int_\Omega \vz \, d \bar{\mu}_+ 
 = \lim_{m' \to \infty} \int_\Omega \vz d \nu^{\rho_{m'}}_{+}  =0.      
\end{align*}
This is equivalent to the former relation in \eqref{supp-mu-in-ve}. 
Recalling that $\mC^{\ve,+}_{i,n}$ and $\mC^{\ve,-}_{i,n}$ are disjoint set, we observe that $\mu^{\ve}_{i,n}$ is a signed 
measure satisfying \eqref{prop-mu-in-ve}. 

We turn to the proof of \eqref{bdd-meas-1}. 
For any $U \subset \subset \Omega$, it follows from \eqref{pre-measu-bdd-1-1} that 
\begin{align*}
\mu^{\ve}_{i,n} \le \mu^{\ve}_{i,n} \lfloor_{\mC^{\ve,+}_{i,n}} 
& \le C(U) E(u_0)^{\frac{1}{2}} 
      + C(U) \liminf_{\rho_{m'} \downarrow 0} \Bigm( \dfrac{E(u^{\ve}_{i-1,n}) - E(w^{\ve, \rho_{m'}}_{i,n})}{\tau_n} \Bigm)^{\frac{1}{2}} \\
& = C(U) E(u_0)^{\frac{1}{2}} 
      + C(U) \Bigm( \dfrac{E(u^{\ve}_{i-1,n}) - E(u^{\ve}_{i,n})}{\tau_n} \Bigm)^{\frac{1}{2}}
\end{align*}
and 
\begin{align*}
\mu^{\ve}_{i,n} \ge \mu^{\ve}_{i,n} \lfloor_{\mC^{\ve,-}_{i,n}} 
& \ge -C(U) E(u_0)^{\frac{1}{2}} 
      - C(U) \liminf_{\rho_{m'} \downarrow 0} \Bigm( \dfrac{E(u^{\ve}_{i-1,n}) - E(w^{\ve, \rho_{m'}}_{i,n})}{\tau_n} \Bigm)^{\frac{1}{2}} \\
& = -C(U) E(u_0)^{\frac{1}{2}} 
      - C(U) \Bigm( \dfrac{E(u^{\ve}_{i-1,n}) - E(u^{\ve}_{i,n})}{\tau_n} \Bigm)^{\frac{1}{2}}. 
\end{align*}
Multiplying $\tau_n$ and summing over $i=0$, $1$, $\cdots$, $n$, we find 
\begin{align} \label{pre-mu-in-ve-bdd}
\tau_n \sum^{n}_{i=0} \mu^{\ve}_{i,n}(U)^2 \le C'(U) E(u_0) T + C'(U) (E(u_0) - E(u_{n,n})) \le C'(U)(T+1)E(u_0).  
\end{align}

It follows from the condition \eqref{fg-cond-2} that there exists a constant $\vd_*>0$ such that 
\begin{align*} 
d(\pd \Omega, \mC^{\ve,\pm}_{i,n}) \ge \vd_*. 
\end{align*}
Thus it follows from \eqref{prop-mu-in-ve} that ${\rm supp}\, \mu^{\ve}_{i,n} \subset \Omega_{\vd_{*}/2}$, where 
$\Omega_{\rho}:= \{ x \in \Omega \mid {\rm dist}(x, \pd \Omega) > \rho \}$. 
Letting $U= \Omega_{\vd_{*}/2}$, we obtain the conclusion. 
\end{proof}

We shall now prove the $C^{1,1}$ regularity of $u^{\ve}_{i,n}$ in $\Omega$. 
In the following, for each $h \in L^{2}(\Omega)$, we denote by $\Delta^{-1} h$ the solution of 
\begin{align*}
\begin{cases}
- \Delta w = h & \text{in} \quad \Omega, \\
w = 0 & \text{on} \quad \pd \Omega. 
\end{cases}
\end{align*}
We start with the following lemma: 
\begin{lem} \label{version-prop}
For each $\ve>0$, $n \in \N$, and $i \in \{\, 1, \cdots, n \,\}$, there exists a function $v^{\ve}_{i,n}$ satisfying the following$\colon$ 
\begin{enumerate}
\item[{\rm (a)}] $v^{\ve}_{i,n} = \Delta u^{\ve}_{i,n} + \Delta^{-1} V^{\ve}_{i,n}$ a.e. in $\Omega${\rm ;}  
\item[{\rm (b)}] $v^{\ve}_{i,n}$ is upper semicontinuous in $\Omega \setminus \mC^{\ve,-}_{i,n}$. 
On the other hand, $v^{\ve}_{i,n}$ is lower semicontinuous in $\Omega \setminus \mC^{\ve,+}_{i,n}${\rm ;} 
\item[{\rm (c)}] for any $x_0 \in \Omega \setminus \mC^{\ve,-}_{i,n}$ and any sequence of balls 
$B_\rho(x_0) \subset \Omega \setminus \mC^{\ve,-}_{i,n}$, it holds that 
\begin{align*}
\dfrac{1}{\av{B_\rho(x_0)}} \int_{B_\rho(x_0)} v^{\ve}_{i,n} \, dx \downarrow v^{\ve}_{i,n}(x_0) 
\quad \text{as} \quad \rho \downarrow 0. 
\end{align*}
On the other hand, for any $x_1 \in \Omega \setminus \mC^{\ve,+}_{i,n}$ and any sequence of balls 
$B_\rho(x_1) \subset \Omega \setminus \mC^{\ve,+}_{i,n}$, we have 
\begin{align*}
\dfrac{1}{\av{B_\rho(x_1)}} \int_{B_\rho(x_1)} v^{\ve}_{i,n} \, dx \uparrow v^{\ve}_{i,n}(x_1) 
\quad \text{as} \quad \rho \downarrow 0. 
\end{align*}
\end{enumerate}
\end{lem}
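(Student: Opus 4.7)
The plan is to realize $v^{\ve}_{i,n}$ as the sum of a continuous perturbation and canonical Green potentials of the Jordan components of the measure $\mu^{\ve}_{i,n}$, and then to read off the one-sided semicontinuity and the monotone mean-value property from classical potential theory.

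First I compute the distributional Laplacian of the formula defining $v^{\ve}_{i,n}$: using $\mu^{\ve}_{i,n} = \Delta^{2} u^{\ve}_{i,n} + V^{\ve}_{i,n}$ and $-\Delta(\Delta^{-1} V^{\ve}_{i,n}) = V^{\ve}_{i,n}$ one obtains
\begin{align*}
-\Delta v^{\ve}_{i,n} = 2 V^{\ve}_{i,n} - \mu^{\ve}_{i,n} \quad \text{in } \mathcal{D}'(\Omega).
\end{align*}
By Theorem \ref{Radon-1}, $\mu^{\ve}_{i,n} = \mu_{+} - \mu_{-}$ with $\mu_{\pm} \ge 0$ supported respectively in the disjoint closed sets $\mC^{\ve,\pm}_{i,n}$, which by \eqref{fg-cond-2} lie at positive distance from $\pd \Omega$. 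Let $G$ denote the Dirichlet Green function of $-\Delta$ on $\Omega$, set
\begin{align*}
\Phi_{\pm}(x) := \int_{\Omega} G(x,y)\, d\mu_{\pm}(y), \qquad \Psi := 2\, \Delta^{-1} V^{\ve}_{i,n},
\end{align*}
and pick for $\Phi_{\pm}$ the canonical lower semicontinuous superharmonic representatives. The function $\Psi$ is continuous on $\overline{\Omega}$ by Sobolev's embedding, since $N \le 3$ and $V^{\ve}_{i,n} \in L^{2}(\Omega)$. Let $H$ be the harmonic extension to $\Omega$ of the boundary trace of $\Delta u^{\ve}_{i,n}$, well defined because interior elliptic regularity and \eqref{fg-cond-2} make $u^{\ve}_{i,n}$ smooth near $\pd\Omega$. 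The identity $-\Delta(H + \Psi - \Phi_{+} + \Phi_{-}) = 2 V^{\ve}_{i,n} - \mu^{\ve}_{i,n}$ together with matching boundary data yields $v^{\ve}_{i,n} = H + \Psi - \Phi_{+} + \Phi_{-}$ almost everywhere in $\Omega$, and I take the right-hand side as the pointwise definition of $v^{\ve}_{i,n}$; this proves (a).

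For (b), $H$ and $\Psi$ are continuous on $\Omega$. On $\Omega \setminus \mC^{\ve,-}_{i,n}$ the measure $\mu_{-}$ places no mass, so $\Phi_{-}$ is harmonic there, while $-\Phi_{+}$ is upper semicontinuous as the negative of an LSC function; their sum is USC. The LSC claim on $\Omega \setminus \mC^{\ve,+}_{i,n}$ is symmetric. For (c), take $x_{0} \in \Omega \setminus \mC^{\ve,-}_{i,n}$ and a ball $B_{\rho}(x_{0}) \subset \Omega \setminus \mC^{\ve,-}_{i,n}$. The functions $H$ and $\Phi_{-}$ are harmonic on $B_{\rho}(x_{0})$, so their ball averages equal their values at $x_{0}$ independently of $\rho$; the superharmonic $\Phi_{+}$ has ball averages that are nondecreasing in $\rho$ (classical monotonicity), so $-\Phi_{+}$ has ball averages decreasing to $-\Phi_{+}(x_{0})$ as $\rho \downarrow 0$; the continuous remainder $\Psi$ contributes only the standard Lebesgue limit. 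Combining these facts gives the monotone decreasing convergence asserted in (c), and the $\uparrow$ case on $\Omega \setminus \mC^{\ve,+}_{i,n}$ is symmetric.

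The main technical obstacle is building a single pointwise representative of $v^{\ve}_{i,n}$ that simultaneously realizes both one-sided semicontinuities: because $\mu^{\ve}_{i,n}$ changes sign between the two contact sets, one genuinely needs the Jordan decomposition and the potential-theoretic representatives of $\Phi_{+}$ and $\Phi_{-}$ separately, so that the nonsmooth part of $v^{\ve}_{i,n}$ splits into a superharmonic and a subharmonic summand before representatives are selected. Once this is set up, the remaining ingredients (Lebesgue differentiation for $\Psi$, mean-value monotonicity for sub/superharmonic functions, and continuity of $\Delta^{-1} V^{\ve}_{i,n}$ via Sobolev embedding when $N \le 3$) are classical.
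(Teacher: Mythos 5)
Your overall strategy — write $v^{\ve}_{i,n}$ via a Riesz-type decomposition into a harmonic part, Green potentials of the two Jordan pieces of $\mu^{\ve}_{i,n}$, and a smooth remainder, and then read off one-sided semicontinuity and monotone mean-value behaviour from classical potential theory — is a genuinely different organization from the paper's. The paper works directly with the averaged functions $v^{\ve,\rho}_{i,n}(x)$, applies Green's representation for the Laplacian of $\Delta u^{\ve}_{i,n}+\Delta^{-1}V^{\ve}_{i,n}$ on balls, and obtains the monotonicity from the sign of $\mu^{\ve}_{i,n}$ together with the monotonicity of $G_\rho$ in $\rho$; your Riesz decomposition makes the sub/superharmonic structure more explicit and modular, which is a reasonable trade.

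However, there is a real gap in your proof of (c), and it comes from the extra term $\Psi = 2\Delta^{-1}V^{\ve}_{i,n}$. You correctly compute, using the convention $-\Delta(\Delta^{-1}V^{\ve}_{i,n})=V^{\ve}_{i,n}$ as literally stated in the paper, that $-\Delta v^{\ve}_{i,n}=2V^{\ve}_{i,n}-\mu^{\ve}_{i,n}$, and this forces the summand $\Psi$ into your decomposition. But $\Psi$ is merely continuous (indeed $C^2$ for $N\le 3$), not harmonic: $V^{\ve}_{i,n}$ has no sign, so $\Psi$ is neither sub- nor superharmonic. Consequently the ball averages of $\Psi$ converge to $\Psi(x_0)$ as $\rho\downarrow 0$ but need not do so monotonically, and the sum of a monotone sequence (from $-\Phi_+$) with a merely convergent one (from $\Psi$) is convergent but not monotone. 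Thus "combining these facts" does not give the monotone decrease asserted in (c). The resolution is a sign: the Green's formula \eqref{G-form-1} used in the paper has correction term $\int_{B_\rho}\bigl[\Delta^2 u^{\ve}_{i,n}+V^{\ve}_{i,n}\bigr]G_\rho\,dx = \int_{B_\rho} G_\rho\,d\mu^{\ve}_{i,n}$, which is Green's representation for $\Delta v^{\ve}_{i,n}=\mu^{\ve}_{i,n}$; this is consistent only if $\Delta\Delta^{-1}=\mathrm{id}$ (not $-\mathrm{id}$). With that convention, $-\Delta v^{\ve}_{i,n}=-\mu^{\ve}_{i,n}$, the $\Psi$ term disappears, the decomposition becomes $v^{\ve}_{i,n}=H-\Phi_++\Phi_-$, every summand is harmonic or of one sign of superharmonicity on the relevant region, and your mean-value argument closes. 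You should either adopt that sign convention explicitly or re-derive the formula so that no unsigned Laplacian term is left over. One minor slip: for the superharmonic $\Phi_+$ the ball averages are nonincreasing in $\rho$ (not nondecreasing), which is in fact what you use to conclude that the averages of $-\Phi_+$ decrease as $\rho\downarrow 0$.
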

\begin{proof}
Let us set 
\begin{align*}
v^{\ve,\rho}_{i,n}(x) 
= \dfrac{1}{| B_{\rho}(x) |} \int_{B_{\rho}(x)} \left[ \Delta u^{\ve}_{i,n}(y) + \Delta^{-1} V^{\ve}_{i,n}(y)\right] \, dy.  
\end{align*}
If $u^{\ve}_{i,n} \in C^{\infty}(\Omega)$, then Green's formula yields that for each $x_{0} \in \Omega$ 
\begin{align}\label{G-form-1}
\Delta u^{\ve}_{i,n}(x_{0}) + \Delta^{-1} V^{\ve}_{i,n}(x_{0}) 
&= \dfrac{1}{| \pd B_{\rho}(x_{0}) |} \int_{\pd B_{\rho}(x_{0})} \left[ \Delta u^{\ve}_{i,n} + \Delta^{-1} V^{\ve}_{i,n}\right] \, dS \\
& \quad - \int_{B_{\rho}(x_{0})} \left[ \Delta^{2} u^{\ve}_{i,n}(x) + V^{\ve}_{i,n}(x) \right] G_{\rho}(x-x_{0}) \, dx, \notag
\end{align}
where $G_{\rho}$ is Green's function defined by 
\begin{align} \label{G-func}
G_{\rho}(r)= 
\begin{cases}
\vspace{0.1cm}
\dfrac{1}{2} (r-\rho) & \text{if} \quad N=1, \\
\vspace{0.1cm}
\dfrac{1}{2 \pi} \log{\dfrac{\rho}{r}} & \text{if} \quad N=2, \\
\dfrac{1}{N(N-2) \omega(N)} (r^{N-2} - \rho^{N-2}) \quad & \text{if} \quad N \ge 3. \\
\end{cases}
\end{align}
We note that $\omega(N)$ denotes the volume of unit ball in $\R^{N}$. 
Thanks to \eqref{prop-mu-in-ve} and the fact that $G_{\rho'} > G_{\rho}$ if $\rho' > \rho$, we observe from \eqref{G-form-1} that 
\begin{align} \label{v-mono-dec}
v^{\ve,\rho}_{i,n}(x_{0}) \le v^{\ve,\rho'}_{i,n}(x_{0}) 
\quad \text{if} \quad \rho < \rho' \quad \text{and} \quad B_{\rho'}(x_{0}) \subset \Omega \setminus \mC^{\ve,-}_{i,n} 
\end{align}
and 
\begin{align} \label{v-mono-inc}
v^{\ve,\rho}_{i,n}(x_{0}) \ge v^{\ve,\rho'}_{i,n}(x_{0}) 
\quad \text{if} \quad \rho < \rho' \quad \text{and} \quad B_{\rho'}(x_{0}) \subset \Omega \setminus \mC^{\ve,+}_{i,n} 
\end{align}
For general $u^{\ve}_{i,n} \in H^{2}_{0}(\Omega)$, making use of the molification of $\Delta u^{\ve}_{i,n} + \Delta^{-1} V^{\ve}_{i,n}$, 
we are able to verify \eqref{v-mono-dec} and \eqref{v-mono-inc}. 
Hence it follws from \eqref{v-mono-dec} and \eqref{v-mono-inc} that 
\begin{align*}
v^{\ve,\rho}_{i,n}(x) \downarrow \bar{v}^{\ve}_{i,n}(x) 
\quad \text{as} \quad \rho \downarrow 0 \quad \text{in} \quad \Omega \setminus \mC^{\ve,-}_{i,n}
\end{align*}
and 
\begin{align*}
v^{\ve,\rho}_{i,n}(x) \uparrow \tilde{v}^{\ve}_{i,n}(x) 
\quad \text{as} \quad \rho \downarrow 0 \quad \text{in} \quad \Omega \setminus \mC^{\ve,+}_{i,n},
\end{align*}
for some functions $\bar{v}^{\ve}_{i,n}$ and $\tilde{v}^{\ve}_{i,n}$. 

Since $v^{\ve,\rho}_{i,n}$ is continuous in $\Omega$, setting 
\begin{align*}
v^{\ve}_{i,n}(x) = 
\begin{cases}
\bar{v}^{\ve}_{i,n}(x) \quad & \text{if} \quad x \in \Omega \setminus \mC^{\ve,-}_{i,n}, \\
\tilde{v}^{\ve}_{i,n}(x) \quad & \text{if} \quad x \in \Omega \setminus \mC^{\ve,+}_{i,n},
\end{cases}
\end{align*}
we deduce that $v^{\ve}_{i,n}$ is upper semicontinuous in $\Omega \setminus \mC^{\ve,-}_{i,n}$, and 
is lower semicontinuous in $\Omega \setminus \mC^{\ve,+}_{i,n}$. 
Recalling that $\Delta u^{\ve}_{i,n} + \Delta^{-1} V^{\ve}_{i,n} \in L^{2}(\Omega)$, we see that 
\begin{align*}
v^{\ve,\rho}_{i,n} \to \Delta u^{\ve}_{i,n} + \Delta^{-1} V^{\ve}_{i,n} \quad \text{as} \quad \rho \downarrow 0 \quad \text{a.e. in} \quad \Omega. 
\end{align*}
Therefore we conclude that $v^{\ve}_{i,n}= \Delta u^{\ve}_{i,n} + \Delta^{-1} V^{\ve}_{i,n}$ a.e. in $\Omega$. 
\end{proof}

\begin{lem} \label{version-est}
For any $x_0 \in \mC^{\ve,+}_{i,n}$, it holds that 
\begin{align} \label{vV-ge-f}
v^{\ve}_{i,n}(x_0) - \Delta^{-1} V^{\ve}_{i,n}(x_0) \ge \Delta f(x_0). 
\end{align}
On the other hand, for any $x_1 \in \mC^{\ve,-}_{i,n}$, we have 
\begin{align} \label{vV-le-g}
v^{\ve}_{i,n}(x_1) - \Delta^{-1} V^{\ve}_{i,n}(x_1) \le \Delta g(x_1). 
\end{align}
\end{lem}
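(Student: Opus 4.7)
The plan is to fix $x_0 \in \mC^{\ve,+}_{i,n}$ and study the auxiliary function $\phi := u^{\ve}_{i,n} - f_\ve$, which by the constraint in $K_\ve$ satisfies $\phi \ge 0$ in $\Omega$ with $\phi(x_0) = 0$; continuity of $\phi$ follows from $u^{\ve}_{i,n} \in H^2_0(\Omega) \hookrightarrow C^0(\con{\Omega})$ since $N \le 3$. I would aim to prove
\begin{align*}
L := v^{\ve}_{i,n}(x_0) - \Delta^{-1} V^{\ve}_{i,n}(x_0) - \Delta f(x_0) \ge 0.
\end{align*}

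First, I would identify $L$ as the limit of volume averages of $\Delta\phi$. Since $\mC^{\ve,+}_{i,n}$ and $\mC^{\ve,-}_{i,n}$ are disjoint and closed, one can fix $\rho_0 > 0$ with $B_{\rho_0}(x_0) \subset \Omega \setminus \mC^{\ve,-}_{i,n}$, so Lemma \ref{version-prop}(c) yields $v^{\ve,\rho}_{i,n}(x_0) \downarrow v^{\ve}_{i,n}(x_0)$ as $\rho \downarrow 0$. Using that $\Delta^{-1} V^{\ve}_{i,n} \in C^0(\con{\Omega})$ (by elliptic regularity and $N \le 3$) and $\Delta f \in C^2(\con{\Omega})$, subtracting the corresponding volume averages of these continuous functions gives
\begin{align*}
L = \lim_{\rho \downarrow 0} \frac{1}{|B_\rho(x_0)|}\int_{B_\rho(x_0)} \Delta \phi \, dy.
\end{align*}

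The crucial step is to extract the sign of $L$ from $\phi \ge 0$. The strategy is to introduce the spherical mean $F(\rho) := \tfrac{1}{|\partial B_\rho(x_0)|}\int_{\partial B_\rho(x_0)} \phi \, dS$, which satisfies $F \ge 0$ on $(0, \rho_0]$ and $F(\rho) \to \phi(x_0) = 0$ as $\rho \downarrow 0$ by continuity. For $\phi \in H^2$, the divergence theorem combined with $|B_\rho|/|\partial B_\rho| = \rho/N$ gives
\begin{align*}
F'(\rho) = \frac{1}{|\partial B_\rho|}\int_{B_\rho(x_0)} \Delta \phi \, dy = \frac{\rho}{N} \cdot \frac{1}{|B_\rho(x_0)|} \int_{B_\rho(x_0)} \Delta \phi \, dy,
\end{align*}
so $F'(\rho) = (L/N)\,\rho\,(1+o(1))$ as $\rho \downarrow 0$; integrating from $0$ (using $F(0^+) = 0$ and that $F'$ is bounded near $0$) then yields the asymptotic expansion $F(\rho) = \tfrac{L}{2N}\rho^2 + o(\rho^2)$. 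Combined with $F(\rho) \ge 0$, dividing by $\rho^2$ and letting $\rho \downarrow 0$ forces $L \ge 0$, which is \eqref{vV-ge-f}.

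The inequality \eqref{vV-le-g} at $x_1 \in \mC^{\ve,-}_{i,n}$ follows by an entirely symmetric argument applied to $\psi := g - u^{\ve}_{i,n} \ge 0$ (vanishing at $x_1$), noting that Lemma \ref{version-prop}(c) now gives $v^{\ve,\rho}_{i,n}(x_1) \uparrow v^{\ve}_{i,n}(x_1)$ and reverses the sign in the asymptotic expansion. The main delicacy will be justifying the asymptotic expansion of $F(\rho)$ for the merely $H^2$ function $\phi$: this rests on the continuity of $\rho \mapsto \tfrac{1}{|B_\rho|}\int_{B_\rho}\Delta\phi\,dy$ (a consequence of $\Delta\phi \in L^2$) together with the pointwise existence of the limit $L$ guaranteed by Lemma \ref{version-prop}(c).
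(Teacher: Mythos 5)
Your proof is correct, and it takes a genuinely different route from the paper's. The paper fixes $x_1\in \mC^{\ve,-}_{i,n}$, picks a sequence $y_m\to x_1$ with $u^\ve_{i,n}(y_m)-g(y_m)\uparrow 0$, applies the Green's representation formula on balls $B_{\rho,m}$ (writing $u^\ve_{i,n}(y_m)$ and $g(y_m)$ as boundary averages minus $G_\rho$-weighted integrals of their Laplacians), uses $u^\ve_{i,n}\le g$ to get a one-sided bound, then exploits the lower semicontinuity of $v^\ve_{i,n}$ on $\Omega\setminus\mC^{\ve,+}_{i,n}$ to select maximizers $y_{m,\rho}$, and finally passes to limits first in $m$ and then in $\rho$. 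Your argument instead works directly at the contact point: it studies the spherical mean $F(\rho)$ of the nonnegative function $\phi=u^\ve_{i,n}-f_\ve$ vanishing at $x_0$, uses the elementary identity $F'(\rho)=\tfrac{\rho}{N}\,|B_\rho|^{-1}\int_{B_\rho}\Delta\phi$, and extracts the sign of $L=\lim_{\rho\downarrow 0}|B_\rho|^{-1}\int_{B_\rho}\Delta\phi$ from the second-order expansion $F(\rho)=\tfrac{L}{2N}\rho^2+o(\rho^2)$ together with $F\ge 0$. The two proofs share the essential input (Lemma \ref{version-prop}(c) identifying $L$ as the monotone limit of volume averages, plus continuity of $\Delta^{-1}V^\ve_{i,n}$ and $\Delta f$), but your version dispenses with the Green's function $G_\rho$, the auxiliary sequence $y_m$, and the semicontinuity-based selection of maximizers, replacing them with a cleaner one-sided second-order Taylor argument; the paper's version, on the other hand, sets up machinery ($G_\rho$ representations) that it reuses elsewhere (Lemma \ref{Lap-bdd}). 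One small point worth making explicit in a write-up: if $L=-\infty$, the same estimate on $F'$ still forces $F<0$ for small $\rho$, so the argument also yields finiteness of $v^\ve_{i,n}(x_0)$, which the lemma's inequality implicitly requires.
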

\begin{proof}
Since the proof of \eqref{vV-ge-f} is similar to the proof of Lemma 3.3 in \cite{NO}, we shall prove the latter assertion.  
Let $x_{1} \in \mC^{\ve,-}_{i,n}$. 
Since $\mC^{\ve,+}_{i,n}$ and $\mC^{\ve,-}_{i,n}$ are disjoint, it holds that $\mC^{\ve,-}_{i,n} \subset \Omega \setminus \mC^{\ve,+}_{i,n}$. 
Then there exists a sequence $\{ y_{m} \} \subset \Omega \setminus \mC^{\ve,+}_{i,n}$ with $y_{m} \to x_{1}$ as $m \to \infty$ such that 
\begin{align} \label{u-g-to-0}
u^{\ve}_{i,n}(y_{m}) - g(y_{m}) \uparrow 0 . 
\end{align}
For each $y_{m}$, let $\rho$ be small enough such that 
$B_{\rho,m}:= \{\, y \in \R^{N} \mid | y - y_{m} |< \rho \,\} \subset \Omega \setminus \mC^{\ve,+}_{i,n}$. 
It follows from Green's formula that 
\begin{align}
u^{\ve}_{i,n}(y_{m}) = \dfrac{1}{| \pd B_{\rho,m}| } \int_{\pd B_{\rho,m}} u^{\ve}_{i,n} \, dS 
   - \int_{B_{\rho,m}} \Delta u^{\ve}_{i,n}(y) G_{\rho}(y_{m}-y)\, dy 
\end{align}
and 
\begin{align}\label{g-G-rho}
g(y_{m}) = \dfrac{1}{| \pd B_{\rho,m}| } \int_{\pd B_{\rho,m}} g \, dS 
   - \int_{B_{\rho,m}} \Delta g(y) G_{\rho}(y_{m}-y)\, dy, 
\end{align}
Since $u^{\ve}_{i,n} \le g$ in $\Omega$, we infer from \eqref{u-g-to-0}--\eqref{g-G-rho} that 
\begin{align*}
\liminf_{m \to \infty} \int_{B_{\rho,m}} \left[ \Delta g(y) - \Delta u^{\ve}_{i,n}(y) \right] G_{\rho}(y_{m}-y) \, dy \ge 0. 
\end{align*}
Thanks to Lemma \ref{version-prop}, the relation is reduced to 
\begin{align} \label{gvV-below}
\liminf_{m \to \infty} \int_{B_{\rho,m}} \left[ \Delta g(y) - v^{\ve}_{i,n}(y) + \Delta^{-1} V^{\ve}_{i,n}(y) \right] G_{\rho}(y_{m}-y) \, dy \ge 0. 
\end{align}
Recalling that $V^{\ve}_{i,n} \in H^{2}_{0}(\Omega)$, we observe from the elliptic regularity, e.g., see \cite{GT}, 
that $\Delta^{-1}V^{\ve}_{i,n} \in H^{4}(\Omega)$. 
We note that Sobolev's embedding theorem implies that $\Delta^{-1} V^{\ve}_{i,n}$ is continuous in $\Omega$ provided $N \le 7$. 
Since $v^{\ve}_{i,n}$ is lower semicontinuous in $\Omega \setminus \mC^{\ve,+}_{i,n}$, 
there exists a point $y_{m,\rho} \in \con{B}_{\rho,m}$ such that the maxmum of 
$\Delta g(y) - v^{\ve}_{i,n}(y) + \Delta^{-1} V^{\ve}_{i,n}(y)$ in $\con{B}_{\rho,m}$ attains at $y=y_{m,\rho}$. 
Hence it follows from \eqref{gvV-below} that there exists a sequence $\{ \vd_{m}\}$ with $\vd_{m} \downarrow 0$ as $m \to \infty$ such that 
\begin{align*}
\Delta g(y_{m,\rho}) - v^{\ve}_{i,n}(y_{m,\rho}) + \Delta^{-1} V^{\ve}_{i,n}(y_{m,\rho}) \ge - \vd_{m}. 
\end{align*}
As $m \to \infty$, $y_{m,\rho}$ converges to a point $y_{\rho} \in \{\, y \in \R^{N} \mid | y - x_{1}| \le \rho \,\}$ up to a subsequence, 
for the sequence $\{ y_{m,\rho} \}$ is bounded. 
Thanks to the lower semicontinuity of $v^{\ve}_{i,n}$, we find 
\begin{align*}
\Delta g(y_{\rho}) - v^{\ve}_{i,n}(y_{\rho}) + \Delta^{-1} V^{\ve}_{i,n}(y_{\rho}) \ge 0 
\end{align*}
for any $\rho>0$ small enough. 
Letting $\rho \downarrow 0$ and making use of the lower semicontinuity of $v^{\ve}_{i,n}$, we conclude \eqref{vV-le-g}.  
\end{proof}

\begin{lem} \label{Lap-bdd}
For each $\ve>0$, $n \in \N$, and $i=1, \ldots, n$, it holds that $\Delta u^{\ve}_{i,n} \in L^{\infty}(\Omega)$. 
Moreover, there exists a positive constant $C$ independent of $\ve$, $n$, and $i$, such that 
\begin{align} \label{pre-Li-Delta-u}
\| \Delta u^{\ve}_{i,n} \|_{L^{\infty}(\Omega)}  
&\le C E(u_0)^{\frac{1}{2}} + \| V^{\ve}_{i,n} \|_{L^2(\Omega)} 
        + \max\{ \| \Delta f_\ve \|_{L^\infty(\Omega)}, \| \Delta g \|_{L^\infty(\Omega)} \} \\
& \qquad + C \left( \dfrac{E(u^{\ve}_{i-1,n}) - E(u^{\ve}_{i,n})}{\tau_n} \right)^{\frac{1}{2}}. \notag
\end{align}
\end{lem}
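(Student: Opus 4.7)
Since $\Delta u^{\ve}_{i,n}=v^{\ve}_{i,n}-\Delta^{-1}V^{\ve}_{i,n}$ almost everywhere in $\Omega$ and $\|\Delta^{-1}V^{\ve}_{i,n}\|_{L^{\infty}(\Omega)}\le C\|V^{\ve}_{i,n}\|_{L^{2}(\Omega)}$ by elliptic $H^{2}$-regularity composed with the Sobolev embedding $H^{2}(\Omega)\hookrightarrow L^{\infty}(\Omega)$ (valid for $N\le 3$), the task reduces to bounding the semicontinuous representative $v^{\ve}_{i,n}$ from Lemma \ref{version-prop} in $L^{\infty}(\Omega)$. I would prove the upper bound; the lower bound is symmetric, trading upper semicontinuity on $\Omega\setminus\mC^{\ve,-}_{i,n}$ and the bound $v^{\ve}_{i,n}\le\Delta g+\Delta^{-1}V^{\ve}_{i,n}$ on $\mC^{\ve,-}_{i,n}$ for their lower analogues on $\mC^{\ve,+}_{i,n}$ supplied by Lemma \ref{version-est}.

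For the upper bound I would split $\Omega$ into three overlapping regions with matching estimates. (a) On $\mC^{\ve,-}_{i,n}$, Lemma \ref{version-est} gives immediately $v^{\ve}_{i,n}\le\|\Delta g\|_{L^{\infty}}+C\|V^{\ve}_{i,n}\|_{L^{2}}$. (b) In the boundary strip $\{x\in\Omega:\mathrm{dist}(x,\partial\Omega)<\vd_{*}/2\}$, condition \eqref{fg-cond-2} together with $u^{\ve}_{i,n}\in H^{2}_{0}(\Omega)$ forces $f_{\ve}<u^{\ve}_{i,n}<g$ strictly, so $\Delta^{2}u^{\ve}_{i,n}+V^{\ve}_{i,n}=0$ holds classically in the strip, and biharmonic elliptic regularity with $C^{4}$ boundary and $L^{2}$ right-hand side combined with the Sobolev embedding yields $\|\Delta u^{\ve}_{i,n}\|_{L^{\infty}}\le C(E(u_{0})^{1/2}+\|V^{\ve}_{i,n}\|_{L^{2}})$ there. (c) For $x_{0}$ in the remaining interior, I would apply the Green's identity underlying Lemma \ref{version-prop} on a ball $B_{\rho}(x_{0})$ with $\rho$ of order $\vd_{*}$: the monotonicity \eqref{v-mono-dec} yields $v^{\ve}_{i,n}(x_{0})\le v^{\ve,\rho}_{i,n}(x_{0})$, the volume-average part of $v^{\ve,\rho}_{i,n}(x_{0})$ is controlled by $C\rho^{-N/2}\|v^{\ve}_{i,n}\|_{L^{2}(\Omega)}\le C(E(u_{0})^{1/2}+\|V^{\ve}_{i,n}\|_{L^{2}})$ through Cauchy--Schwarz together with Proposition \ref{est-mini}, and the Green-kernel contribution is controlled by $|\mu^{\ve}_{i,n}|(\Omega)\cdot\sup_{y\in\mathrm{supp}\,\mu^{\ve}_{i,n}\cap B_{\rho}(x_{0})}|G_{\rho}(x_{0}-y)|$; the total-variation bound \eqref{pre-measu-bdd-1-1} from the proof of Theorem \ref{Radon-1} then furnishes the concluding term $C((E(u^{\ve}_{i-1,n})-E(u^{\ve}_{i,n}))/\tau_{n})^{1/2}$.

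The main obstacle is that the Green kernel $G_{\rho}$ is singular at the center of the ball for $N\ge 2$, so in region (c) the point $x_{0}$ must be kept at a fixed positive distance from $\mathrm{supp}\,\mu^{\ve}_{i,n}\subset\mC^{\ve,+}_{i,n}\cup\mC^{\ve,-}_{i,n}$; this is the reason the geometric separation of the coincidence sets from $\partial\Omega$ provided by \eqref{fg-cond-2} enters the proof critically. Points that escape region (c) because they are too close to $\mC^{\ve,+}_{i,n}$ but not inside it are handled by combining Lemma \ref{version-est} on $\mC^{\ve,+}_{i,n}$ with the upper semicontinuity of $v^{\ve}_{i,n}$ on $\Omega\setminus\mC^{\ve,-}_{i,n}$ from Lemma \ref{version-prop}(b) to propagate the upper bound across $\partial\mC^{\ve,+}_{i,n}$; the whole argument follows the lines of Lemma 3.4 in \cite{NO}.
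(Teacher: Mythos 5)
Your reduction $\Delta u^{\ve}_{i,n}=v^{\ve}_{i,n}-\Delta^{-1}V^{\ve}_{i,n}$ and the treatment of regions (a) and (b) are fine, but the interior region (c) has a genuine gap that your proposed patch does not close. You control the Green-kernel contribution by $\av{\mu^{\ve}_{i,n}}(\Omega)\cdot\sup_{y\in{\rm supp}\,\mu^{\ve}_{i,n}\cap B_\rho(x_0)}\av{G_\rho(x_0-y)}$, which requires $x_0$ to stay a fixed distance from ${\rm supp}\,\mu^{\ve}_{i,n}\subset\mC^{\ve,+}_{i,n}\cup\mC^{\ve,-}_{i,n}$, since $G_\rho$ blows up at the center for $N\ge 2$. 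The leftover set — points near but outside the coincidence sets — is exactly where the estimate is needed, and it cannot be covered by shrinking $\rho$ (then the average no longer controls the pointwise value). Your fix for points near $\mC^{\ve,+}_{i,n}$ is to combine Lemma~\ref{version-est} on $\mC^{\ve,+}_{i,n}$ with the upper semicontinuity from Lemma~\ref{version-prop}(b), but this runs the inequality the wrong way: on $\mC^{\ve,+}_{i,n}$, Lemma~\ref{version-est} gives the \emph{lower} bound $v^{\ve}_{i,n}\ge \Delta f+\Delta^{-1}V^{\ve}_{i,n}$, and upper semicontinuity gives $\limsup_{x\to x_0}v^{\ve}_{i,n}(x)\le v^{\ve}_{i,n}(x_0)$ — neither supplies an upper bound for $v^{\ve}_{i,n}$ on $\mC^{\ve,+}_{i,n}$ or in its neighborhood. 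Symmetrically, near $\mC^{\ve,-}_{i,n}$ you lack a lower bound.

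The paper circumvents this with a maximum (domination) principle for potentials, which is the missing ingredient. Writing the Green's-function representation $v^{\ve}_{i,n}(x)=-\tilde{G}_R(x)-I_1(x)+\va(x)$ with $\tilde{G}_R(x)=\int_{B_{R/2}}G_R(x-y)\,d\mu^{\ve}_{i,n}(y)$, the terms $I_1,\va$ are bounded pointwise, and Lemma~\ref{version-est} then yields a one-sided bound on $\tilde{G}_R$ restricted to $\mC^{\ve,+}_{i,n}$ (from above) and on $\mC^{\ve,-}_{i,n}$ (from below). Since $\mu^{\ve}_{i,n}$ is a signed measure whose positive and negative parts are supported precisely on these two disjoint sets, the maximum principle for potentials of Landkof's Theorems 1.6 and 1.10 propagates these one-sided bounds from the supports to all of $B_{R/3}$, giving the two-sided bound $\av{\tilde{G}_R}\le C(\dots)$ without any pointwise control on $G_R$ near its singularity. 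This global propagation step, not a pointwise kernel estimate, is what makes the argument close; it has no analogue in your proposal, so the proof as written does not go through.
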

\begin{proof}
Let us set 
\begin{align} \label{def-U}
U^{\ve}_{i,n}:= u^{\ve}_{i,n} + (\Delta^2)^{-1} V^{\ve}_{i,n}, 
\end{align}
where $(\Delta^2)^{-1} V^{\ve}_{i,n}$ denotes the unique solution of 
\begin{align*}
\begin{cases}
\Delta^2 w = V^{\ve}_{i,n} \quad & \text{in} \quad \Omega, \\
w=0, \, \Delta w =0, \quad & \text{on} \quad \pd \Omega. 
\end{cases}
\end{align*}
Fix $x_0 \in \Omega$ arbitrarily. Let $B_\rho$ denote the ball center $x_0$ and the radius $\rho$. 
For any $R>0$ with $\con{B}_R \subset \Omega$, let $\vz \in C^{\infty}_c(B_R)$ be a test function with 
$\vz=1$ in $B_{2R/3}$, $0 \le \vz \le 1$ elsewhere. 
By the same argument as in the proof of Lemma 3.4 in \cite{NO}, we see that for any $x \in B_{R/2}$ 
\begin{align} \label{u-G-form}
v^{\ve}_{i,n}(x) = - \int_{B_{R/2}} G_R(x-y) d \mu^{\ve}_{i,n}(y) 
  - I_1(x) + \va(x) 
\end{align}
with 
\begin{align*}
I_1(x) := \int_{D_{R/2}} \vz(y) G_R(x-y) \Delta^2 U^{\ve}_{i,n}(y) \, dy. 
\end{align*}
and 
\begin{align*}
\av{\va(x)} \le C_1 \| \Delta U^{\ve}_{i,n} \|_{L^2(\Omega)} \quad \text{in} \quad B_{R/2}. 
\end{align*}
Here $G_R$ is Green's function given by \eqref{G-func} with $\rho=R$. 
We note that for any $x \in B_{R/3}$ 
\begin{align*}
\av{I_1(x)} & \le C \av{\mu^{\ve}_{i,n}}(D_{R/2}) 
 \le C_2 E(u_0)^{\frac{1}{2}} + C_3 \left( \dfrac{E(u^{\ve}_{i-1,n}) - E(u^{\ve}_{i,n})}{\tau_n} \right)^{\frac{1}{2}}, 
\end{align*}
where the constants $C_2$ and $C_3$ are independent of $\ve$ and $n$. 
Set 
\begin{align*}
\tilde{G}_{R}(x) = \int_{B_{R/2}} G_R(x-y) d \mu^{\ve}_{i,n}(y). 
\end{align*}
Thanks to Lemma \ref{version-est}, we observe from \eqref{u-G-form} that  
\begin{align*}
\tilde{G}_{R}(x) &= -v^{\ve}_{i,n}(x) -I_1(x) + \va(x) 
 \le -\Delta^{-1}V^{\ve}_{i,n}(x) - \Delta f_\ve(x) + \av{I_1(x)} + \va(x) \\
& <  C_1 \| \Delta U^{\ve}_{i,n} \|_{L^2(\Omega)} 
     + C_2 E(u_0)^{\frac{1}{2}} + C_3 \left( \dfrac{E(u^{\ve}_{i-1,n}) - E(u^{\ve}_{i,n})}{\tau_n} \right)^{\frac{1}{2}}  \\
& \qquad   + C_4 \| V^{\ve}_{i,n} \|_{L^2(\Omega)} + \| \Delta f_\ve \|_{L^\infty(\Omega)} 
\qquad \text{in} \quad \mC^{\ve,+}_{i,n} \cap B_{R/3},  
\end{align*}
and while 
\begin{align*}
\tilde{G}_{R}(x) &= -v^{\ve}_{i,n}(x) - I_1(x) + \va(x) 
 \ge -\Delta^{-1}V^{\ve}_{i,n}(x) - \Delta g(x) -\av{I_1(x)} + \va(x) \\
& > - C_1 \| \Delta U^{\ve}_{i,n} \|_{L^2(\Omega)} 
    - C_2 E(u_0)^{\frac{1}{2}} - C_3 \left( \dfrac{E(u^{\ve}_{i-1,n}) - E(u^{\ve}_{i,n})}{\tau_n} \right)^{\frac{1}{2}}  \\
& \qquad - C_4 \| V^{\ve}_{i,n} \|_{L^2(\Omega)} - \| \Delta g \|_{L^\infty(\Omega)}
 \quad\,\,\, \text{in} \quad \mC^{\ve,-}_{i,n} \cap B_{R/3}. 
\end{align*}
Then, along the same lines as in the proof of Theorems 1.6 and 1.10 of \cite{La}, we deduce that 
\begin{align*}
\limsup_{d(x, \mC^{\ve,+}_{i,n}) \to 0} \tilde{G}_{R}(x) 
& \le C_1 \| \Delta U^{\ve}_{i,n} \|_{L^2(\Omega)} 
   + C_2 E(u_0)^{\frac{1}{2}} + C_3 \left( \dfrac{E(u^{\ve}_{i-1,n}) - E(u^{\ve}_{i,n})}{\tau_n} \right)^{\frac{1}{2}} \\
& \qquad + C_4 \| V^{\ve}_{i,n} \|_{L^2(\Omega)} + \| \Delta f_\ve \|_{L^\infty(\Omega)} 
\end{align*}
and 
\begin{align*} 
\limsup_{d(x, \mC^{\ve,-}_{i,n}) \to 0} \tilde{G}_{R}(x) 
& \ge - C_1 \| \Delta U^{\ve}_{i,n} \|_{L^2(\Omega)} 
    - C_2 E(u_0)^{\frac{1}{2}} - C_3 \left( \dfrac{E(u^{\ve}_{i-1,n}) - E(u^{\ve}_{i,n})}{\tau_n} \right)^{\frac{1}{2}} \\ 
& \qquad - C_4 \| V^{\ve}_{i,n} \|_{L^2(\Omega)} - \| \Delta g \|_{L^\infty(\Omega)}. 
\end{align*}
Thus the maximal principle implies that 
\begin{align*}
| \tilde{G}_{R}(x) |  
& \le C_1 \| \Delta U^{\ve}_{i,n} \|_{L^2(\Omega)} 
+ C_2 E(u_0)^{\frac{1}{2}} + C_3 \left( \dfrac{E(u^{\ve}_{i-1,n}) - E(u^{\ve}_{i,n})}{\tau_n} \right)^{\frac{1}{2}}  \\
& \qquad + C_4 \| V^{\ve}_{i,n} \|_{L^2(\Omega)} 
        + \max\{\, \| \Delta f_\ve \|_{L^\infty(\Omega)}, \| \Delta g \|_{L^\infty(\Omega)} \,\}
 \quad \text{in} \quad B_{R/3}.  
\end{align*}
Combining \eqref{u-G-form} with Theorem \ref{Radon-1} and Lemma \ref{version-prop}, we obtain \eqref{pre-Li-Delta-u}.  
\end{proof}

\begin{lem}{{\rm (\cite{CFT})}} \label{CFT-lem}
Let $N \le 3$. Let $w \in H^2(\Omega)$ be a non-negative function satisfying 
\begin{align*}
\| \Delta w \|_{L^\infty(\Omega)} \le M_0. 
\end{align*}
Then there exists a constant $M$ depending only on $M_0$ such that if 
\begin{align*}
x_0 \in J := \{ x \in \Omega \mid w(x)=0 \}
\end{align*}
then it holds that 
\begin{align}
| w(x) | \le M | x-x_0 |^2, \quad | \nabla w(x) | \le M | x - x_0 |, \quad \text{in} \quad B(x_0, \rho/2), 
\end{align}
where $\rho= {\rm dist}(x_0, \pd \Omega)$. 
\end{lem}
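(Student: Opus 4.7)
The plan is a standard two-step argument: first establish the quadratic bound on $w$ by harmonic replacement combined with Harnack's inequality, and then upgrade it to the linear gradient estimate via rescaling and interior elliptic regularity.

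\textbf{Step 1 (quadratic growth).} Fix $x \in B(x_0, \rho/2)$ and set $r := 2|x - x_0| \in (0, \rho)$, so that $B_r(x_0) \Subset \Omega$. I would introduce the harmonic replacement $h$ solving $\Delta h = 0$ in $B_r(x_0)$ with trace $h = w$ on $\partial B_r(x_0)$. Since $w \ge 0$, the maximum principle forces $h \ge 0$ in $B_r(x_0)$. The difference $w - h$ vanishes on $\partial B_r(x_0)$ and satisfies $|\Delta(w - h)| = |\Delta w| \le M_0$, so comparison with the explicit radial supersolution $v(y) := (M_0/(2N))(r^2 - |y - x_0|^2)$ of $-\Delta v = M_0$ gives $|w - h| \le M_0 r^2/(2N)$ in $B_r(x_0)$. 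Evaluating at $x_0$ and using $w(x_0) = 0$, we obtain $0 \le h(x_0) \le M_0 r^2/(2N)$; Harnack's inequality applied to the non-negative harmonic function $h$ then yields a dimensional constant $C_H$ with $\sup_{B_{r/2}(x_0)} h \le C_H\, h(x_0)$. Combining these controls,
\[
w(x) \le h(x) + |w - h|(x) \le (C_H + 1)\, M_0 r^2/(2N) = M_1\, |x - x_0|^2,
\]
with $M_1$ depending only on $M_0$ and $N$.

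\textbf{Step 2 (linear gradient bound).} For $x \in B(x_0, \rho/2)$ with $0 < |x - x_0| < \rho/4$, I would set $r := 2|x - x_0|$ and rescale via $\tilde w(z) := r^{-2}\, w(x_0 + r z)$ on $B_1(0)$. Then $\tilde w \ge 0$, $\tilde w(0) = 0$, $\|\Delta \tilde w\|_{L^\infty(B_1)} \le M_0$, and by Step 1, $\|\tilde w\|_{L^\infty(B_1)} \le M_1$. Calder\'on--Zygmund interior $W^{2,p}$ estimates on $B_{3/4} \Subset B_1$ give $\|\tilde w\|_{W^{2,p}(B_{3/4})} \le C(M_0, M_1, N, p)$ for every $p < \infty$; choosing $p > N$ and invoking the Sobolev embedding $W^{2,p}(B_{3/4}) \hookrightarrow C^1(B_{3/4})$ (which is where the hypothesis $N \le 3$ is comfortably used) yields $\|\nabla \tilde w\|_{L^\infty(B_{1/2})} \le C_\ast$ with $C_\ast$ depending only on $M_0$ and $N$. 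Unscaling at $z = (x - x_0)/r \in \partial B_{1/2}$ gives $|\nabla w(x)| = r\, |\nabla \tilde w(z)| \le 2 C_\ast |x - x_0|$. For the remaining annulus $|x - x_0| \in [\rho/4, \rho/2)$, a single rescaling at the fixed radius $\rho/2$ (again using Step 1 as an $L^\infty$ control) gives a uniform bound $|\nabla w(x)| \le C \rho$, which is absorbed into $|\nabla w(x)| \le 4C |x - x_0|$ since $|x - x_0| \ge \rho/4$.

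\textbf{Main obstacle.} The crux of the argument is Step 1, where the non-negativity hypothesis is used in two essential places: to obtain $h \ge 0$, so that Harnack is applicable, and to convert the one-sided bound $|w - h|(x_0) \le M_0 r^2/(2N)$ into an upper bound on $h(x_0)$ that Harnack can propagate. Without non-negativity, the best one could expect from $\|\Delta w\|_{L^\infty} \le M_0$ alone is $C^{1,\alpha}$ regularity for every $\alpha < 1$, which is strictly weaker than the linear gradient bound sought here. Step 2, once Step 1 is in hand, is essentially mechanical, the sole delicate point being that pointwise values of $\nabla w$ must make sense — guaranteed by the Sobolev embedding under the dimensional restriction $N \le 3$.
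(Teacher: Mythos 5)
The lemma is cited from \cite{CFT} without proof in the paper, so there is no in-paper argument to compare against; your job was essentially to reconstruct the classical proof, and the route you take (harmonic replacement plus Harnack for the quadratic bound, then rescaling plus Calder\'on--Zygmund and Sobolev embedding for the gradient bound) is a standard and substantively correct one. Step 1 is correct: the barrier comparison $|w-h|\le M_0 r^2/(2N)$, the sign of $h$, and the Harnack propagation from $h(x_0)$ all work. Step 2 is correct in idea, and the scaling identities $\Delta\tilde w(z)=\Delta w(x_0+rz)$, $\nabla w(x_0+rz)=r\nabla\tilde w(z)$ are right.

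Two points deserve correction. First, your covering in Step 2 has a genuine (if minor) gap: rescaling at the fixed radius $\rho/2$ and using the interior $W^{2,p}$ estimate on $B_{3/4}$ only yields the gradient bound for $|x-x_0|<\tfrac34\cdot\tfrac\rho2=\tfrac{3\rho}{8}$, so the annulus $\tfrac{3\rho}{8}\le|x-x_0|<\tfrac\rho2$ is not covered. Moreover, to rescale at a radius larger than $\rho/2$ you would need the $L^\infty$ control from Step 1 on a ball larger than $B(x_0,\rho/2)$, which you do not yet have. The fix is to rerun Step 1 with a different dilation factor: taking $r=\alpha|x-x_0|$ for some $1<\alpha<2$ (and applying Harnack on $B_{r/\alpha}(x_0)\Subset B_r(x_0)$) gives the quadratic bound on $B(x_0,\rho/\alpha)$ for any $\alpha>1$, with constants depending on $\alpha$. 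Choosing, say, $\alpha=4/3$ gives the quadratic bound on $B(x_0,3\rho/4)$, after which a rescaling at $r=3\rho/4$ (with the interior estimate on $B_{3/4}$ in rescaled coordinates, covering $|x-x_0|<9\rho/16>\rho/2$) closes the argument.

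Second, the attribution of the hypothesis $N\le3$ to the Sobolev embedding in Step 2 is off. The embedding $W^{2,p}\hookrightarrow C^{1,\alpha}$ holds for $p>N$ in every dimension, so the gradient estimate you derive does not actually use $N\le3$. Indeed, since $\Delta w\in L^\infty_{\mathrm{loc}}$ forces $w\in W^{2,p}_{\mathrm{loc}}\subset C^{1,\alpha}_{\mathrm{loc}}$ for all $p<\infty$, the pointwise statements in the lemma make sense without dimensional restriction. The condition $N\le3$ in \cite{CFT} arises elsewhere (to guarantee $H^2(\Omega)\hookrightarrow C^0(\overline\Omega)$ so that solutions of the biharmonic variational inequality are continuous), and is carried along in the statement of the lemma for consistency with that setting rather than being needed for this particular estimate. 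Finally, as a stylistic remark, Step 1 can be done without Harnack: writing $\hat w=w+\tfrac{M_0}{2N}|x-x_0|^2$ (subharmonic) and $\tilde w=w-\tfrac{M_0}{2N}|x-x_0|^2$ (superharmonic), the super-mean-value inequality for $\tilde w$ at $x_0$ controls the integral of $w$ over $B_r(x_0)$, and the sub-mean-value inequality for $\hat w$ at $x$ converts this into a pointwise bound, giving the same quadratic growth with purely elementary tools.
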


\begin{lem} \label{D2-bdd-lem-1}
For any $x \in \Omega \setminus (\mC^{\ve,+}_{i,n} \cup \mC^{\ve,-}_{i,n})$, it holds that 
\begin{align*}
| D^{2} u^{\ve}_{i,n}(x) | \le C ( \| \Delta u^{\ve}_{i,n} \|_{L^{2}(\Omega)} + \| V^{\ve}_{i,n} \|_{L^{2}(\Omega)} 
  & + \| D^{2} f \|_{L^{\infty}(\Omega)} + \| \Delta^{2} f \|_{L^{2}(\Omega)} \\ 
  & + \| D^{2} g \|_{L^{\infty}(\Omega)} + \| \Delta^{2} g \|_{L^{2}(\Omega)}). \notag
\end{align*}
\end{lem}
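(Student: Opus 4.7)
The plan is to combine the quadratic growth estimate of Lemma \ref{CFT-lem} near the coincidence sets with interior $W^{4,2}$ regularity for the biharmonic operator on balls contained in the non-coincidence region, linked together by a scale-invariant rescaling around the nearest contact point. Fix $x \in \Omega \setminus (\mC^{\ve,+}_{i,n} \cup \mC^{\ve,-}_{i,n})$ and let $y$ be a closest point of $\mC^{\ve,+}_{i,n} \cup \mC^{\ve,-}_{i,n}$ to $x$, with $d := |x-y|$. The two cases $y \in \mC^{\ve,+}_{i,n}$ and $y \in \mC^{\ve,-}_{i,n}$ are symmetric, so I focus on the former: set $\phi := u^{\ve}_{i,n} - f_{\ve}$, which is non-negative on $\Omega$ and vanishes at $y$. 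Since $\|\Delta \phi\|_{L^{\infty}(\Omega)} \le \|\Delta u^{\ve}_{i,n}\|_{L^{\infty}(\Omega)} + \|\Delta f\|_{L^{\infty}(\Omega)}$ is finite by Lemma \ref{Lap-bdd}, Lemma \ref{CFT-lem} produces a constant $M$ such that $|\phi(z)| \le M|z-y|^{2}$ on a fixed ball $B(y, \vd_{\ast}/2)$, where $\vd_{\ast}>0$ is the uniform separation of the coincidence sets from $\pd\Omega$ established in the proof of Theorem \ref{Radon-1}.

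By minimality of $d$, the ball $B(x, d/2)$ sits inside the non-coincidence region, where $\Delta^{2} u^{\ve}_{i,n} + V^{\ve}_{i,n} = 0$ and therefore $\Delta^{2} \phi = -V^{\ve}_{i,n} - \Delta^{2} f$. I rescale via $\tilde{\phi}(z) := d^{-2} \phi(y+dz)$: the quadratic growth yields $\|\tilde{\phi}\|_{L^{\infty}(B(z_{x}, 1/2))} \le C M$, where $z_{x} := (x-y)/d$ lies on the unit sphere, while a direct change of variables produces the rescaled bound $\|\Delta^{2} \tilde{\phi}\|_{L^{2}(B(z_{x},1/2))} \le d^{(4-N)/2} ( \|V^{\ve}_{i,n}\|_{L^{2}(\Omega)} + \|\Delta^{2} f\|_{L^{2}(\Omega)} )$. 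Crucially, the restriction $N \le 3$ makes the exponent $(4-N)/2 \ge 1/2$ non-negative, so this factor remains bounded as $d \downarrow 0$; the complementary range $d \ge \vd_{\ast}/4$, in which $x$ already sits at a fixed positive distance from the free boundary, is dispatched by a standard fixed-scale interior estimate on $B(x, \vd_{\ast}/8)$.

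Applying interior $W^{4,2}$ estimates for the biharmonic operator to $\tilde{\phi}$ on $B(z_{x}, 1/2)$ and invoking the Sobolev embedding $H^{4} \hookrightarrow C^{2}$ (valid precisely because $N \le 3$) controls $\|D^{2}\tilde{\phi}\|_{L^{\infty}(B(z_{x},1/4))}$ by a multiple of $\|\tilde{\phi}\|_{L^{2}} + \|\Delta^{2}\tilde{\phi}\|_{L^{2}}$. The scale-invariance $D^{2} \tilde{\phi}(z) = D^{2} \phi(y+dz)$ then transfers this into the pointwise estimate $|D^{2}\phi(x)| = |D^{2}\tilde{\phi}(z_{x})|$ in terms of the quantities on the right-hand side, and writing $D^{2} u^{\ve}_{i,n}(x) = D^{2} \phi(x) + D^{2} f(x)$ introduces the $\|D^{2} f\|_{L^{\infty}(\Omega)}$ term. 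The case $y \in \mC^{\ve,-}_{i,n}$ is handled identically with $\psi := g - u^{\ve}_{i,n}$ in place of $\phi$, producing the corresponding $g$-terms in the bound. The main technical obstacle is that the constant $M$ of Lemma \ref{CFT-lem} depends nonlinearly on $\|\Delta\phi\|_{L^{\infty}(\Omega)}$, and hence implicitly on the energy-dissipation expression appearing in Lemma \ref{Lap-bdd}; care is needed to absorb this dependence into the final constant $C$ while retaining an estimate of the stated explicit form.
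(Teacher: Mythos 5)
Your proof follows the same route as the paper: quadratic growth from Lemma \ref{CFT-lem} near the contact sets, a scale-invariant rescaling, interior $H^{4}$ regularity for $\Delta^{2}$, and the Sobolev embedding $H^{4}\hookrightarrow C^{2}$ (hence the restriction $N\le 3$). The only cosmetic differences are that the paper rescales via $w_{d}(x)=d^{-2}(u^{\ve}_{i,n}-g)(d(x-x_{0}))$ around the point where the estimate is wanted rather than around the nearest contact point, and it handles points near $\pd\Omega$ with a cutoff and boundary elliptic estimate rather than a fixed-scale interior one. Regarding the ``technical obstacle'' you flag at the end: the constant $M$ in Lemma \ref{CFT-lem} is automatically \emph{linear} in $M_{0}$ (replace $w$ by $\lambda w$ in that lemma), so it absorbs into the final $C$ without issue; the paper uses this implicitly by writing $\|\Delta(u^{\ve}_{i,n}-g)\|_{L^{\infty}(\Omega)}$ as the prefactor in its quadratic growth estimates \eqref{est-1-u-g}--\eqref{est-1-grad-u-g}, at the cost of $\|\Delta u^{\ve}_{i,n}\|_{L^{\infty}(\Omega)}$ rather than $\|\Delta u^{\ve}_{i,n}\|_{L^{2}(\Omega)}$ actually appearing on the right (which is how it is invoked in Theorem \ref{C-1-1_u-in}).
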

\begin{proof}
Since $u^{\ve}_{i,n}$ is continuous in $\Omega$, we see that $\vd:={\rm dist}(\mC^{\ve,+}_{i,n} \cup \mC^{\ve,-}_{i,n}, \pd \Omega) > 0$. 
To begin with, recall that 
\begin{align*}
\Delta^{2} u^{\ve}_{i,n} + V^{\ve}_{i,n} =0 \quad \text{in} \quad \Omega \setminus \Omega_{\vd}, 
\end{align*}
where $\Omega_{\vd}= \{ x \in \Omega \mid {\rm dist}(x, \pd \Omega) > \vd \}$. 
By the elliptic regularity theory (i.e., see \cite{GT}), we deduce from $\pd \Omega \in C^{4}$ that 
\begin{align} \label{Delta-H2-in}
\| \Delta u^{\ve}_{i,n} \|_{H^{2}(\Omega \setminus \Omega_{\rho})} 
\le C (\| \Delta u^{\ve}_{i,n} \|_{L^{2}(\Omega)} + \| V^{\ve}_{i,n} \|_{L^{2}(\Omega)})
\quad \text{for any} \quad 0 < \rho < \vd, 
\end{align}
where the constant $C>0$ is independent of $i$, $n$ and $\ve$. 
Setting $\tilde{u}:= \eta u^{\ve}_{i,n}$, where $\eta \in C^{\infty}_{c}(\Omega \setminus \Omega_{\vd})$ with $0 \le \eta \le 1$ and 
\begin{align*}
\eta(x)= 
\begin{cases}
1 \quad & \text{in} \quad \Omega \setminus \Omega_{3\vd/4}, \\
0 \quad & \text{in} \quad \Omega_{7\vd/8}, 
\end{cases}
\end{align*}
we find 
\begin{align*}
\begin{cases}
\Delta^{2} \tilde{u} = F(\eta,u^{\ve}_{i,n}) - \eta V^{\ve}_{i,n} \quad & \text{in} \quad \Omega \setminus \Omega_{7\vd/8}, \\
\tilde{u}= \pd_{\nu} \tilde{u}=0 \quad & \text{on} \quad \pd (\Omega \setminus \Omega_{7\vd/8}), 
\end{cases}
\end{align*}
where 
\begin{align*}
F(\eta,u^{\ve}_{i,n}):= \Delta^{2} \eta u^{\ve}_{i,n} + 2 \nabla \Delta \eta \cdot \nabla u^{\ve}_{i,n} 
 + 2 \Delta (\nabla \eta \cdot \nabla u^{\ve}_{i,n}) + 2 \Delta \eta \Delta u^{\ve}_{i,n} 
 + 2 \nabla \eta \cdot \nabla \Delta u^{\ve}_{i,n}.
\end{align*}
Thanks to Theorem 2.20 in \cite{GGS}, we observe from \eqref{Delta-H2-in} and $\pd \Omega \in C^{4}$ that  
\begin{align*}
\| \tilde{u} \|_{H^{4}(\Omega \setminus \Omega_{7 \vd/8})} 
& \le C (\|F(\eta,u^{\ve}_{i,n}) \|_{L^{2}(\Omega \setminus \Omega_{7\vd/8})} 
          + \|\eta V^{\ve}_{i,n} \|_{L^{2}(\Omega \setminus \Omega_{7\vd/8})}) \\
& \le C (\| \Delta u^{\ve}_{i,n} \|_{L^{2}(\Omega)} + \| V^{\ve}_{i,n} \|_{L^{2}(\Omega)}).           
\end{align*}
Since $\| u^{\ve}_{i,n} \|_{H^{4}(\Omega \setminus \Omega_{3\vd/4})} = \| \tilde{u} \|_{H^{4}(\Omega \setminus \Omega_{3\vd/4})} 
\le \| \tilde{u} \|_{H^{4}(\Omega \setminus \Omega_{7 \vd/8})}$, the estimate implies 
\begin{align*}
\| D^{2} u^{\ve}_{i,n} \|_{H^{2}(\Omega \setminus \Omega_{3\vd/4})} 
 \le C (\| \Delta u^{\ve}_{i,n} \|_{L^{2}(\Omega)} + \| V^{\ve}_{i,n} \|_{L^{2}(\Omega)}). 
\end{align*}
Then it follows from Sobolev's embedding theorem that 
\begin{align} \label{D2-uni-est-1}
\| D^{2} u^{\ve}_{i,n} \|_{L^{\infty}(\Omega \setminus \Omega_{3 \vd/4})} 
 \le C ( \| \Delta u^{\ve}_{i,n} \|_{L^{2}(\Omega)} + \| V^{\ve}_{i,n} \|_{L^{2}(\Omega)}), 
\end{align}
where the constant $C$ is independent of $i$, $n$, and $\ve$. 

Let $x_{0} \in \Omega_{\vd/2} \setminus (\mC^{\ve,+}_{i,n} \cup \mC^{\ve,-}_{i,n})$ satisfy 
${\rm dist}(x_{0},\mC^{\ve,+}_{i,n} \cup \mC^{\ve,-}_{i,n}) \le \vd$. 
Here we may assume that 
\begin{align*}
{\rm dist}(x_{0},\mC^{\ve,+}_{i,n} \cup \mC^{\ve,-}_{i,n})= {\rm dist}(x_{0},\mC^{\ve,-}_{i,n}). 
\end{align*}
From Lemmas \ref{Lap-bdd} and \ref{CFT-lem}, there exists a constant $C>0$ independent of $i$, $n$, and $\ve$ such that 
\begin{gather}
| (u^{\ve}_{i,n} - g)(x)| 
 \le C \| \Delta (u^{\ve}_{i,n} - g) \|_{L^{\infty}(\Omega)} {\rm dist}(x,\mC^{\ve,-}_{i,n})^{2}, \label{est-1-u-g}\\
| \nabla (u^{\ve}_{i,n} - g)(x)| 
  \le C \| \Delta (u^{\ve}_{i,n} - g) \|_{L^{\infty}(\Omega)} {\rm dist}(x,\mC^{\ve,-}_{i,n}), \label{est-1-grad-u-g}
\end{gather}
in $B(x_{0},d)$, where $d= {\rm dist}(x_{0},\mC^{\ve,-}_{i,n})$. 
We consider 
\begin{align*}
w_{d}(x)= \dfrac{1}{d^{2}}(u^{\ve}_{i,n}-g)(d(x-x_{0})) \quad \text{in} \quad B(x_{0},1). 
\end{align*}
For the simplicity, we may assume $x_{0}=0$. Then it follows from \eqref{est-1-u-g}-\eqref{est-1-grad-u-g} that 
\begin{align*}
| w_{d}(x) | \le C \| \Delta (u^{\ve}_{i,n} - g) \|_{L^{\infty}(\Omega)}, \,\,\,
| \nabla w_{d}(x) | \le C \| \Delta (u^{\ve}_{i,n} - g) \|_{L^{\infty}(\Omega)}, \quad \text{in} \quad B(0,1). 
\end{align*}
Since  
\begin{align*}
\Delta^2 w_d(x) = - d^2 V^{\ve}_{i,n}(d(x-x_{0})) - d^2 \Delta^2 g(d(x-x_{0})) \quad \text{in} \quad B(0,1), 
\end{align*}
we observe from the same argument as in the derivation of \eqref{D2-uni-est-1} that 
\begin{align*}
| D^2 w_d(x) | \le C ( \| \Delta u^{\ve}_{i,n} \|_{L^{2}(\Omega)} + \| V^{\ve}_{i,n} \|_{L^{2}(\Omega)} 
                      + \sum^{2}_{i=1} \| \Delta^{i} g \|_{L^{2}(\Omega)} ) 
                       \quad \text{in} \quad B(0, \tfrac{1}{2}). 
\end{align*}
Thus it holds that 
\begin{align} \label{D2-uni-est-2}
| D^2 u^{\ve}_{i,n}(x)| \le C ( \| \Delta u^{\ve}_{i,n} \|_{L^{2}(\Omega)} & + \| V^{\ve}_{i,n} \|_{L^{2}(\Omega)} \\
  & + \| D^{2} g \|_{L^{\infty}(\Omega)} + \| \Delta^{2} g \|_{L^{2}(\Omega)}) \notag
\quad \text{in} \quad B(x_{0}, d/2). 
\end{align}
If ${\rm dist}(x_{0},\mC^{\ve,+}_{i,n} \cup \mC^{\ve,-}_{i,n})= {\rm dist}(x_{0},\mC^{\ve,+}_{i,n})$, 
then we obtain \eqref{D2-uni-est-2} replaced $g$ by $f$. 
We thus completed the proof. 
\end{proof}

\begin{thm} \label{C-1-1_u-in}
It holds that $u^{\ve}_{i,n} \in W^{2, \infty}(\Omega)$. Moreover, there exists a positive constant $C$ independent of $\ve$ 
and $n$ such that 
\begin{align*}
\tau_n \sum^{n}_{i=1} \| D^2 u^{\ve}_{i,n} \|^2_{L^\infty(\Omega)} 
\le C ( E(u_0) & + \| D^{2} f \|_{L^{\infty}(\Omega)} + \| \Delta^{2} f \|_{L^{2}(\Omega)} \\
      & + \| D^{2} g \|_{L^{\infty}(\Omega)} + \| \Delta^{2} g \|_{L^{2}(\Omega)}). 
\end{align*}
\end{thm}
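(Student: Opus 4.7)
The plan is to combine the pointwise $L^{\infty}$ estimate of Lemma \ref{D2-bdd-lem-1} (valid on $\Omega\setminus(\mC^{\ve,+}_{i,n}\cup\mC^{\ve,-}_{i,n})$) with an almost-everywhere identification of $D^{2}u^{\ve}_{i,n}$ on the two coincidence sets, and then to square the resulting bound and sum against $\tau_{n}$ using the uniform estimates of Proposition \ref{est-mini}.

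First, on the coincidence sets the function $u^{\ve}_{i,n}$ literally equals $f_{\ve}$ (respectively $g$), so I would argue that
\begin{align*}
D^{2}u^{\ve}_{i,n}=D^{2}f \text{ a.e. on } \mC^{\ve,+}_{i,n},\qquad D^{2}u^{\ve}_{i,n}=D^{2}g \text{ a.e. on } \mC^{\ve,-}_{i,n}.
\end{align*}
Writing $v=u^{\ve}_{i,n}-f_{\ve}\in H^{2}(\Omega)$, which vanishes on $\mC^{\ve,+}_{i,n}$, the Stampacchia truncation property yields $\nabla v=0$ a.e. on $\mC^{\ve,+}_{i,n}$. Applying the same property to each component $\pd_{j}v\in H^{1}(\Omega)$, which then vanishes a.e. on $\mC^{\ve,+}_{i,n}$, produces $\nabla(\pd_{j}v)=0$ a.e. on that set, and hence $D^{2}v=0$ a.e. on $\mC^{\ve,+}_{i,n}$. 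The argument on $\mC^{\ve,-}_{i,n}$ is identical. Consequently
\begin{align*}
\|D^{2}u^{\ve}_{i,n}\|_{L^{\infty}(\mC^{\ve,+}_{i,n}\cup\mC^{\ve,-}_{i,n})}\le\max\{\|D^{2}f\|_{L^{\infty}(\Omega)},\|D^{2}g\|_{L^{\infty}(\Omega)}\}.
\end{align*}

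Combining this with Lemma \ref{D2-bdd-lem-1}, which handles the complement, gives a pointwise $L^{\infty}$ bound on all of $\Omega$:
\begin{align*}
\|D^{2}u^{\ve}_{i,n}\|_{L^{\infty}(\Omega)}^{2}\le C\bigl(\|\Delta u^{\ve}_{i,n}\|_{L^{2}(\Omega)}^{2}+\|V^{\ve}_{i,n}\|_{L^{2}(\Omega)}^{2}+A_{f,g}\bigr),
\end{align*}
where $A_{f,g}:=\|D^{2}f\|_{L^{\infty}(\Omega)}^{2}+\|\Delta^{2}f\|_{L^{2}(\Omega)}^{2}+\|D^{2}g\|_{L^{\infty}(\Omega)}^{2}+\|\Delta^{2}g\|_{L^{2}(\Omega)}^{2}$. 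Multiplying by $\tau_{n}$ and summing for $i=1,\dots,n$, I would invoke \eqref{bdd-Delta-u_in} to bound $\tau_{n}\sum_{i}\|\Delta u^{\ve}_{i,n}\|_{L^{2}(\Omega)}^{2}\le 2TE(u_{0})$, and the discrete Dirichlet bound \eqref{bdd-V_n} to bound $\tau_{n}\sum_{i}\|V^{\ve}_{i,n}\|_{L^{2}(\Omega)}^{2}\le 2E(u_{0})$, while the $A_{f,g}$ term contributes $T A_{f,g}$. This absorbs into the stated inequality (with a constant depending on $T$ but independent of $\ve$ and $n$), and in particular gives $u^{\ve}_{i,n}\in W^{2,\infty}(\Omega)$ for each $i,n,\ve$.

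The only step beyond bookkeeping is the a.e. identification of $D^{2}u^{\ve}_{i,n}$ on the coincidence sets, which I expect to be the main (but still standard) point via the two-fold application of the Stampacchia property; everything else is a direct combination of Lemma \ref{D2-bdd-lem-1} and Proposition \ref{est-mini}.
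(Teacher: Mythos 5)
Your proposal is correct, but it replaces the paper's key step with a different mechanism. The paper bounds the second-order difference quotient $D^{2}_{h}u^{\ve}_{i,n}(x)$ directly: when $x$ is within $4|h|$ of $\mC^{\ve,+}_{i,n}\cup\mC^{\ve,-}_{i,n}$ it uses the CFT-type quadratic growth estimate \eqref{est-1-u-g} to bound $|D^{2}_{h}(u^{\ve}_{i,n}-g)(x)|$ by $C\|\Delta(u^{\ve}_{i,n}-g)\|_{L^{\infty}(\Omega)}$ (and then Lemma~\ref{Lap-bdd} for the $L^{\infty}$ norm of the Laplacian), and when $x$ is farther away it invokes Lemma~\ref{D2-bdd-lem-1}; letting $h\to0$ gives the $L^{\infty}$ bound on $D^{2}u^{\ve}_{i,n}$. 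You instead sidestep the difference-quotient machinery entirely by observing that on the two contact sets the Hessian is pinned down a.e.\ by the two-fold Stampacchia identity applied to $u^{\ve}_{i,n}-f_{\ve}$ and $u^{\ve}_{i,n}-g$, so that the only remaining region is precisely the one Lemma~\ref{D2-bdd-lem-1} already covers. Your route is cleaner in that it avoids re-deriving the near-contact bound that Lemma~\ref{D2-bdd-lem-1} already established, and it does not explicitly need Lemma~\ref{Lap-bdd} at the top level; the paper's route is in the classical CFT style and simultaneously exhibits the uniform second-difference bound, which is the usual way of characterizing $C^{1,1}$ regularity. Both yield the same quantitative estimate after squaring and summing against Proposition~\ref{est-mini} (as you do; note the right-hand side of the stated inequality is dimensionally loose in the paper — your version with the norms squared is what the argument actually produces).

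One small caveat worth making explicit if you write this up: the bound on the coincidence sets is an a.e.\ bound (Stampacchia gives $D^{2}(u^{\ve}_{i,n}-f_{\ve})=0$ a.e.\ on $\mC^{\ve,+}_{i,n}$), while Lemma~\ref{D2-bdd-lem-1} gives a genuine pointwise bound on the open complement via interior elliptic regularity and rescaling. Since these two regions cover $\Omega$ up to a null set, the combination is exactly an $L^{\infty}$ (essential supremum) bound, which is what $W^{2,\infty}$ requires; no further gluing argument is needed.
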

\begin{proof}
Let $e_j$ be the unit vector in the direction of the positive $x_j$ axis. 
Fix $x \in \Omega$. For $|h| \in \R$ small enough, we consider the second order differencial quotient  
\begin{align*}
D^{2}_{h} u^{\ve}_{i,n}(x) = \dfrac{u^{\ve}_{i,n}(x + h e_j) + u^{\ve}_{i,n}(x - h e_j) - 2 u^{\ve}_{i,n}(x)}{2 h^{2}}. 
\end{align*}
If ${\rm dist}(x, \mC^{\ve,+}_{i,n} \cup \mC^{\ve,-}_{i,n}) < 4 |h|$, 
then there exists $x_0 \in \mC^{\ve,+}_{i,n} \cup \mC^{\ve,-}_{i,n}$ such that 
\begin{align*}
|x - x_{0} | =  {\rm dist}(x, \mC^{\ve,+}_{i,n} \cup \mC^{\ve,-}_{i,n}) < 4 |h|. 
\end{align*}
We may assume $x_0 \in \mC^{\ve,-}_{i,n}$
Making use of \eqref{est-1-u-g}, we find 
\begin{align*}
& | D^{2}_{h} (u^{\ve}_{i,n} - g)(x)| \\
& \,\,\, \le \dfrac{C}{h^2} \| \Delta (u^{\ve}_{i,n} - g) \|_{L^{\infty}(\Omega)} 
 \left[ {\rm dist}(x+h e_{j},\mC^{\ve,-}_{i,n})^{2} + {\rm dist}(x-h e_{j},\mC^{\ve,-}_{i,n})^{2} 
         + {\rm dist}(x,\mC^{\ve,-}_{i,n})^{2} \right]\\
& \,\,\, \le C \| \Delta (u^{\ve}_{i,n} - g) \|_{L^{\infty}(\Omega)}. 
\end{align*}
On the other hand, if ${\rm dist}(x, \mC^{\ve,+}_{i,n} \cup \mC^{\ve,-}_{i,n}) \ge 4 |h|$, 
then we observe from Lemma \ref{D2-bdd-lem-1} that 
\begin{align*}
| D^{2}_{h} u^{\ve}_{i,n}(x) | \le | D_{x_{j} x_{j}} u^{\ve}_{i,n}(\tilde{x}) |
 \le C ( & \| \Delta u^{\ve}_{i,n} \|_{L^{2}(\Omega)} + \| V^{\ve}_{i,n} \|_{L^{2}(\Omega)} + \| D^{2} f \|_{L^{\infty}(\Omega)}\\ 
  &  + \| \Delta^{2} f \|_{L^{2}(\Omega)} + \| D^{2} g \|_{L^{\infty}(\Omega)} + \| \Delta^{2} g \|_{L^{2}(\Omega)}), 
\end{align*}
where $\tilde{x} \in B(x, 2 {\rm dist}(x, \mC^{\ve,+}_{i,n} \cup \mC^{\ve,-}_{i,n}))$. 
Consequently we see that, for any $x \in \Omega$, if $|h|$ is small enough, 
\begin{align*}
| D^{2}_{h} u^{\ve}_{i,n} (x) | 
\le C ( & \| \Delta u^{\ve}_{i,n} \|_{L^{\infty}(\Omega)} + \| V^{\ve}_{i,n} \|_{L^{2}(\Omega)} + \| D^{2} f \|_{L^{\infty}(\Omega)}\\ 
  &  + \| \Delta^{2} f \|_{L^{2}(\Omega)} + \| D^{2} g \|_{L^{\infty}(\Omega)} + \| \Delta^{2} g \|_{L^{2}(\Omega)}),  
\end{align*} 
where $C>0$ is independent of $x$ and $h$. 
Therefore we deduce that 
\begin{align} \label{pre-uni-D2}
| D_{x_{j} x_{j}} u^{\ve}_{i,n} (x) | 
\le C ( & \| \Delta u^{\ve}_{i,n} \|_{L^{\infty}(\Omega)} + \| V^{\ve}_{i,n} \|_{L^{2}(\Omega)} + \| D^{2} f \|_{L^{\infty}(\Omega)}\\ 
  &  + \| \Delta^{2} f \|_{L^{2}(\Omega)} + \| D^{2} g \|_{L^{\infty}(\Omega)} + \| \Delta^{2} g \|_{L^{2}(\Omega)}) 
  \quad \text{in} \quad \Omega.  \notag
\end{align} 
Combining \eqref{pre-uni-D2} with Proposition \ref{est-mini} and Lemma \ref{Lap-bdd}, we obtain the conclusion. 
\end{proof}

Let us set  
\begin{align}
\mC^{+}_{i,n} &= \{\, x \in \Omega \setminus \Omega_0 \mid u_{i,n}(x)= f(x) \,\}, \\
\mC^{-}_{i,n} &= \{\, x \in \Omega \setminus \Omega_0 \mid u_{i,n}(x)= g(x) \,\},  
\end{align}
where $\Omega_0$ is defined in \eqref{Omega_0}. 
\begin{thm} \label{Radon-2}
As $\ve \downarrow 0$, the signed measure $\mu^{\ve}_{i,n}$ converges to a signed Radon measure $\mu_{i,n}$ in $\Omega$ 
defined by 
\begin{align*}
\mu_{i,n} = 
\begin{cases}
\Delta^2 u_{i,n} + V_{i,n} \quad & \text{in} \quad \Omega \setminus \Omega_0, \\
\Delta^2 f \quad & \text{in} \quad \Omega_0. 
\end{cases}
\end{align*} 
Moreover it holds that  
${\rm supp}\, \mu_{i,n} \subset \mC^{+}_{i,n} \cup \mC^{-}_{i,n} \cup \Omega_0$,  
\begin{align*}
\mu_{i,n} 
\begin{cases}
\ge 0 \quad & \text{in} \quad \mC^{+}_{i,n}, \\
\le 0 \quad & \text{in} \quad \mC^{-}_{i,n},  
\end{cases}
\end{align*}
and there exists a positive constant $C>0$ independent of $n$ such that 
\begin{align} \label{bdd-meas-2}
\tau_n \sum^{n}_{i=1} \mu_{i,n}(\Omega)^2 < C E(u_0) + T \| \Delta^2 f \|^2_{L^{\infty}(\Omega_0)}. 
\end{align}
\end{thm}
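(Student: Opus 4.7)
The plan is to mimic the compactness-and-identification strategy from the proof of Theorem \ref{Radon-1}, now passing $\ve \downarrow 0$ and carefully isolating the contribution of the coincidence set $\Omega_0$. First I would establish weak-$\ast$ compactness of $\{\mu^{\ve}_{i,n}\}_{\ve}$ as signed measures: the Hahn decomposition $\mu^{\ve}_{i,n} = \nu^{\ve}_+ - \nu^{\ve}_-$ used in Theorem \ref{Radon-1}, combined with the uniform bound \eqref{bdd-meas-1} and ${\rm supp}\,\nu^{\ve}_\pm \subset \Omega_{\vd_*/2}$, gives $|\mu^{\ve}_{i,n}|(\Omega) \le C_{i,n}$ uniformly in $\ve$, so Banach--Alaoglu extracts a subsequence $\ve_k \downarrow 0$ and a signed Radon measure $\mu_{i,n}$ with $\mu^{\ve_k}_{i,n} \rightharpoonup \mu_{i,n}$ in the sense of measures. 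Lemma \ref{uniform-conv-1} gives $u^{\ve_k}_{i,n} \to u_{i,n}$ uniformly in $\Omega$ and weakly in $H^2_0(\Omega)$, whence $V^{\ve_k}_{i,n} \rightharpoonup V_{i,n}$ weakly in $L^2(\Omega)$. Testing
\begin{align*}
\int_\Omega \vp\, d\mu^{\ve_k}_{i,n} = \int_\Omega \bigl[\Delta u^{\ve_k}_{i,n}\, \Delta \vp + V^{\ve_k}_{i,n}\, \vp \bigr]\, dx
\end{align*}
against $\vp \in C^\infty_c(\Omega)$ and passing to the limit identifies $\mu_{i,n} = \Delta^2 u_{i,n} + V_{i,n}$ in $\mathcal{D}'(\Omega)$.

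Next I would characterise the support, the sign, and the restriction to $\Omega_0$. Since $f \le u_{j,n} \le g$ with $f=g$ on $\Omega_0$, one has $u_{j,n} \equiv f$ on $\Omega_0$ for every $j$; hence $V_{i,n} \equiv 0$ on $\Omega_0$ and the distributional identity above localises to $\mu_{i,n}\lfloor_{\Omega_0} = \Delta^2 f$. Outside $\Omega_0$, if $x_0 \notin \mC^+_{i,n} \cup \mC^-_{i,n}$ then $f(x_0)<u_{i,n}(x_0)<g(x_0)$, so continuity provides a neighborhood $W \subset \Omega \setminus \Omega_0$ with $u_{i,n}-f \ge \vd$ and $g-u_{i,n} \ge \vd$ for some $\vd>0$; the uniform convergences $u^{\ve_k}_{i,n}\to u_{i,n}$ and $f_{\ve_k}\to f$ force $f_{\ve_k} < u^{\ve_k}_{i,n} < g$ strictly on $W$ for large $k$, so $\mu^{\ve_k}_{i,n}\equiv 0$ on $W$ and $\mu_{i,n}\equiv 0$ there in the limit. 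The sign statement follows from the same localisation: near a point of $\mC^+_{i,n}$ one has $u_{i,n} < g-\vd$, so $\mC^{\ve_k,-}_{i,n}$ eventually misses the neighborhood, the negative part of $\mu^{\ve_k}_{i,n}$ vanishes there, and weak-$\ast$ passage yields $\mu_{i,n}\ge 0$; the case of $\mC^-_{i,n}$ is symmetric.

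For the estimate \eqref{bdd-meas-2} I would split $\mu_{i,n}(\Omega)^2 \le 2\mu_{i,n}(\Omega\setminus\Omega_0)^2 + 2\mu_{i,n}(\Omega_0)^2$. The second summand equals $2\bigl(\int_{\Omega_0}\Delta^2 f\,dx\bigr)^2 \le 2|\Omega|^2 \|\Delta^2 f\|^2_{L^\infty(\Omega_0)}$, and summing $\tau_n$ times this over $i$ produces the $T\|\Delta^2 f\|^2_{L^\infty(\Omega_0)}$ contribution after absorbing $|\Omega|^2$ into $C$. For the first summand, weak-$\ast$ lower semicontinuity of the total variation gives $|\mu_{i,n}|(U) \le \liminf_k |\mu^{\ve_k}_{i,n}|(U)$ for every $U \subset\subset \Omega \setminus \Omega_0$; taking the supremum over such $U$ and applying Fatou to the sum over $i$ yields $\tau_n \sum_i \mu_{i,n}(\Omega\setminus\Omega_0)^2 \le \liminf_k \tau_n \sum_i |\mu^{\ve_k}_{i,n}|(\Omega)^2 \le C E(u_0)$, where the last inequality is \eqref{bdd-meas-1} combined with \eqref{pre-mu-in-ve-bdd} applied with $U = \Omega_{\vd_*/2}$ (which contains the support of $\mu^{\ve_k}_{i,n}$). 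Combining the two pieces produces \eqref{bdd-meas-2}.

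The step I expect to be most delicate is the clean identification $\mu_{i,n}\lfloor_{\Omega_0} = \Delta^2 f$ as an equality of measures rather than merely of distributions on the interior of $\Omega_0$: a priori the weak-$\ast$ limit could acquire singular contributions concentrated on $\partial \Omega_0$, which is precisely where the two coincidence sets meet in the limit problem. Ruling these out requires the pointwise identity $u_{i,n}=f$ on \emph{all} of $\Omega_0$ together with the $C^4$ regularity of $f$, so that $\Delta^2 f$ is a genuine bounded density and no concentration on $\partial \Omega_0$ survives the limit passage.
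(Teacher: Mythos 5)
Your argument is correct and follows essentially the same compactness--identification--support--estimate strategy as the paper's own proof. The one place you proceed differently is the coincidence set: you first establish $\mu_{i,n}=\Delta^2 u_{i,n}+V_{i,n}$ in $\mathcal{D}'(\Omega)$ and then localize to $\Omega_0$ using $u_{i,n}\equiv f$ and $V_{i,n}\equiv 0$ there, whereas the paper shows $\mu^{\ve}_{i,n}\lfloor_{\Omega_0}\rightharpoonup\Delta^2 f$ directly at the $\ve$-level from the pinching $|u^{\ve}_{i,n}-f|\le\ve$ and $|V^{\ve}_{i,n}|\le 2\ve/\tau_n$ on $\Omega_0$ before passing to the limit; the two routes require the same ingredients and yield the same conclusion. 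The delicacy at $\partial\Omega_0$ that you flag at the end is genuine, but note that the paper's proof is exposed to it to exactly the same degree, since it too only tests against $C^{\infty}_c(\Omega_0)$ and $C^{2}_c(\Omega\setminus\Omega_0)$ and thus identifies the limiting measure only on the two open pieces.
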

\begin{proof} 
To begin with, we shall prove that $\mu^{\ve}_{i,n} \lfloor_{\Omega_0} \rightharpoonup \Delta^2 f$ as $\ve \downarrow 0$, i.e., 
\begin{align} \label{Ome_0-0}
\int_\Omega \left[ \Delta u^{\ve}_{i,n} \Delta \vp + V^{\ve}_{i,n} \vp \right] \, dx 
\to \int_\Omega \Delta f \Delta \vp \, dx \quad \text{as} \quad \ve \downarrow 0 
\quad \text{for any} \quad \vp \in C^{\infty}_{c}(\Omega_0) . 
\end{align}
Since it holds that 
\begin{align*}
\av{u^{\ve}_{i,n}(x) - f(x)} \le \ve \quad \text{in} \quad \Omega_0, 
\end{align*}
we infer that 
\begin{align} \label{Ome_0-1}
\av{\int_\Omega \left( \Delta u^{\ve}_{i,n} - \Delta f \right)\Delta \vp \, dx } 
 \le \| u^{\ve}_{i,n} - f \|_{L^\infty(\Omega_0)} \int_\Omega \av{\Delta^2 \vp} \, dx 
 \le \ve \int_\Omega \av{\Delta^2 \vp} \, dx. 
\end{align}
On the other hand, from 
\begin{align*}
\av{V^{\ve}_{i,n}} 
 \le \dfrac{1}{\tau_n} \left\{ \av{u^{\ve}_{i,n} - f} + \av{u^{\ve}_{i-1,n} - f} \right\} \le \dfrac{2}{\tau_n} \ve, 
\end{align*}
we have 
\begin{align} \label{Ome_0-2}
\av{\int_\Omega V^{\ve}_{i,n} \vp \, dx} 
 \le \dfrac{2}{\tau_n} \ve \int_\Omega \av{\vp}\, dx. 
\end{align}
Then \eqref{Ome_0-1} and \eqref{Ome_0-2} implies \eqref{Ome_0-0}. 

From now on, we write $\mu^{\ve}_{i,n} \lfloor_{\Omega \setminus \Omega_0} = \nu^{\ve,+}_{i,n} - \nu^{\ve,-}_{i,n}$, where 
$\nu^{\ve,\pm}_{i,n}$ are positive measure in $\Omega$ 
with ${\rm supp}\, \nu^{\ve,\pm}_{i,n} \subset \mC^{\ve,\pm}_{i,n}$, respectively. 
By the proof of Theorem \ref{Radon-1}, there exist measures $\bar{\mu}^{\pm}_{i,n}$ in $\Omega$ such that 
\begin{align*}
\nu^{\ve,\pm}_{i,n} \rightharpoonup \bar{\mu}^{\pm}_{i,n} \quad \text{as} \quad \ve \downarrow 0, 
\end{align*} 
i.e., 
\begin{align*}
\int_\Omega \vz d \nu^{\ve,\pm}_{i,n} \to \int_\Omega \vz d \bar{\mu}^{\pm}_{i,n} 
\quad \text{for any} \quad \vz \in C_c(\Omega \setminus \Omega_0) \quad \text{as} \quad \ve \downarrow 0. 
\end{align*}
Since 
\begin{align*}
\int_\Omega \vz d \nu^{\ve,\pm}_{i,n} 
 = \pm \int_\Omega \left[ \Delta u^{\ve}_{i,n} \Delta \vz + V^{\ve}_{i,n} \vz \right] \, dx 
 \to \pm \int_\Omega \left[ \Delta u_{i,n} \Delta \vz + V_{i,n} \vz \right] \, dx 
\end{align*}
for any $\vz \in C^2_c(\Omega \setminus \Omega_0)$ as $\ve \downarrow 0$,
it holds that $\bar{\mu}^{\pm}_{i,n} = \pm(\Delta^2 u_{i,n} + V_{i,n})$. We claim that 
\begin{align} \label{supp-mu-in-2}
{\rm supp} \, \bar{\mu}^+_{i,n} \subset \mC^{+}_{i,n}, \qquad  {\rm supp} \, \bar{\mu}^{-}_{i,n} \subset \mC^{-}_{i,n}. 
\end{align}
It is sufficient to show the former relation. 
Let $x_0 \in \Omega \setminus (\mC^{+}_{i,n} \cup \Omega_0)$. 
Then there exist a neighborhood $W \subset \Omega \setminus \Omega_0$ of $x_0$ and a constant $\vd>0$ such that 
\begin{align*}
u_{i,n}(x) - f(x) > \vd \quad \text{in} \quad W. 
\end{align*}
Since $u^{\ve}_{i,n}$ uniformly converges to $u_{i,n}$, there exists $\ve_*>0$ such that for any $\ve < \ve_*$ 
\begin{align*}
\av{u^{\ve}_{i,n}(x) - u_{i,n}(x)} < \dfrac{\vd}{3} \quad \text{in} \quad W.  
\end{align*}
Thus, for any $\ve < \min\{ \ve_*, \vd/3\}$, we have 
\begin{align*}
u^{\ve}_{i,n}(x) - f_\ve(x) 
 > u_{i,n}(x) - f(x) - \av{u^{\ve}_{i,n}(x) - u_{i,n}(x)} - \av{f_\ve(x) - f(x)} 
 > \dfrac{\vd}{3} \quad \text{in} \quad W, 
\end{align*}
i.e., $W \subset \Omega \setminus (\mC^{\ve,+}_{i,n} \cup \Omega_0)$ for $\ve>0$ small enough. 
Hence we infer that  for any $\vz \in C_c(W)$ 
\begin{align*}
\int_\Omega \vz d \bar{\mu}^{+}_{i,n} 
 = \lim_{\ve \downarrow 0} \int_\Omega \vz d \nu^{\ve,+}_{i,n} =0. 
\end{align*}
Therefore the relation \eqref{supp-mu-in-2} holds. 

Finally we turn to \eqref{bdd-meas-2}. 
It follows from the proof of Theorem \ref{Radon-1} that 
\begin{align*}
\bar{\mu}^{\pm}_{i,n}(\Omega) \le \liminf_{\ve \downarrow 0} \nu^{\ve,\pm}_{i,n}(\Omega) 
\le C(U) E(u_0)^{\frac{1}{2}} + C(U) \left( \dfrac{ E(u^{\ve}_{i-1,n}) - E(u^{\ve}_{i,n}) }{\tau_n} \right)^{\frac{1}{2}}.  
\end{align*}
Moreover it holds that  
\begin{align*}
\tau_{n} \sum^{n}_{i=1} \mu_{i,n}(\Omega_{0})^{2} \le C T \| \Delta^2 f \|^2_{L^{\infty}(\Omega_0)},  
\end{align*}
where the constant $C$ is independent of $n$. 
Recalling that ${\rm sup}\, \mu_{i,n} \subset \mC^{+}_{i,n} \cup \mC^{-}_{i,n} \cup \Omega_0$, we obtain 
\begin{align*}
\tau_n \sum^{n}_{i=1} \mu_{i,n}(\Omega)^2 
 &\le C_1 T E(u_0) + C_2 \sum^{n}_{i=1} \{ E(u^{\ve}_{i-1,n}) - E(u^{\ve}_{i,n}) \} + T \| \Delta^2 f \|^2_{L^{\infty}(\Omega_0)} \\
 &\le C E(u_0) + T \| \Delta^2 f \|^2_{L^{\infty}(\Omega_0)}. 
\end{align*}
We thus completed the proof. 
\end{proof}


\section{Proof of the main theorem} \label{proof-of-main-theorem}
In this section, we prove Theorem \ref{main-thm}. 
First we shall prove the convergence of the piecewise linear interpolation $u_{n}$ of $\{ u_{i,n} \}$. 
The proof is followed from the uniform estimates on $\{u_{n} \}$. 
Since the estimates have already obtained by Proposition \ref{est-mini}, we are able to prove the following result along the 
same lines as in the proof of Theorem 4.1 in \cite{NO}. 

\begin{thm} \label{conv-1}
Let $u_n$ be the piecewise linear interpolation of $\{ u_{i,n} \}$. Then there exists a function 
\begin{align*}
u \in L^\infty(0,T;H^2_0(\Omega)) \cap H^1(0,T;L^2(\Omega)) 
\end{align*}
such that for any $T<\infty$ 
\begin{align*}
u_n \rightharpoonup u \quad \text{in} \quad L^2(0,T;H^2(\Omega)) \cap H^1(0,T;L^2(\Omega)) 
\quad \text{as} \quad n \to \infty, 
\end{align*}
up to a subsequence. Moreover 
\begin{align*}
\int^T_0 \!\!\! \int_\Omega \av{\pd_t u}^2 \, dx dt \le 2 E(u_0), 
\end{align*}
$f(x) \le u(x,t) \le g(x)$ for $x \in \Omega$ and every $t \in [\, 0,T \,]$, and for each $\va \in (\, 0,1/2 \,)$, 
it holds that 
\begin{align*}
u_n \to u  \quad \text{in} \quad C^{0,\va}([\, 0,T \,];L^2(\Omega)) \quad \text{as} \quad n \to \infty. 
\end{align*} 
\end{thm}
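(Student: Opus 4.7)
The plan is to argue much as in Theorem 4.1 of \cite{NO}, extracting weak limits from the piecewise linear interpolation and upgrading to strong convergence via compactness in time. The starting point is to transfer the uniform bounds of Proposition \ref{est-mini}, which are stated for $u^{\ve}_{i,n}$, to $u_{i,n}$ and $V_{i,n}$ themselves. By Lemma \ref{uniform-conv-1}, up to a subsequence $u^{\ve}_{i,n} \to u_{i,n}$ uniformly in $\Omega$ and weakly in $H^{2}_{0}(\Omega)$, and hence $V^{\ve}_{i,n} \to V_{i,n}$ weakly in $L^{2}(\Omega)$. Weak lower semi-continuity applied to \eqref{bdd-V_n}-\eqref{bdd-Delta-u_in} therefore yields
\begin{align*}
\int^{T}_{0} \!\!\! \int_{\Omega} V_{n}(x,t)^{2} \, dxdt \le 2 E(u_{0}), \qquad \sup_{i} \| \Delta u_{i,n} \|_{L^{2}(\Omega)}^{2} \le 2 E(u_{0}).
\end{align*}
Since $\Delta u_{n}(\cdot,t)$ is a convex combination of $\Delta u_{i-1,n}$ and $\Delta u_{i,n}$, the piecewise linear interpolation $u_{n}$ is uniformly bounded in $L^{\infty}(\, 0,T ; H^{2}_{0}(\Omega)\,)$, while $\pd_{t} u_{n} = V_{n}$ is uniformly bounded in $L^{2}(\, 0,T; L^{2}(\Omega)\,)$.

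From these bounds, the Banach--Alaoglu theorem yields a subsequence and a function $u \in L^{\infty}(\, 0,T ; H^{2}_{0}(\Omega)\,) \cap H^{1}(\, 0,T; L^{2}(\Omega)\,)$ such that
\begin{align*}
u_{n} \rightharpoonup u \quad \text{in} \quad L^{2}(\, 0,T;H^{2}(\Omega)\,), \qquad \pd_{t} u_{n} \rightharpoonup \pd_{t} u \quad \text{in} \quad L^{2}(\, 0,T;L^{2}(\Omega)\,).
\end{align*}
The estimate $\int^{T}_{0} \| \pd_{t} u \|_{L^{2}(\Omega)}^{2} \, dt \le 2 E(u_{0})$ follows from weak lower semi-continuity of the $L^{2}$-norm applied to $\{ V_{n} \}$.

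To obtain strong convergence in $C^{0,\va}(\, [\, 0,T \,];L^{2}(\Omega)\,)$, the idea is to exploit that the $H^{1}$-in-time bound gives uniform $\tfrac{1}{2}$-H\"older continuity in $L^{2}$. Indeed, writing $u_{n}(t) - u_{n}(s) = \int^{t}_{s} V_{n}(\cdot,\tau) \, d\tau$ and applying the Cauchy-Schwarz inequality yields
\begin{align*}
\| u_{n}(t) - u_{n}(s) \|_{L^{2}(\Omega)} \le \sqrt{t-s} \, \Big( \int^{T}_{0} \| V_{n}(\cdot,\tau) \|_{L^{2}(\Omega)}^{2} d\tau \Big)^{1/2} \le \sqrt{2 E(u_{0})} \sqrt{t-s}.
\end{align*}
Coupled with the uniform $H^{2}_{0}$-bound at every $t$ and the compact embedding $H^{2}_{0}(\Omega) \hookrightarrow L^{2}(\Omega)$, the Ascoli--Arzel\`a theorem gives precompactness in $C(\, [\, 0,T \,];L^{2}(\Omega)\,)$. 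Interpolating between this uniform convergence and the uniform $C^{0,1/2}$-bound of $\{ u_{n} \}$ then upgrades the convergence to $C^{0,\va}$ for any $\va \in (\, 0,1/2 \,)$.

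Finally, the pointwise constraint $f(x) \le u(x,t) \le g(x)$ passes to the limit because each $u_{i,n} \in K$ and the piecewise linear interpolation preserves this convex constraint; combined with the $C(\, [\, 0,T \,];L^{2}(\Omega)\,)$ convergence and continuity of $u$ in time, one obtains the bound for every $t \in [\, 0,T \,]$. The initial condition $u(\cdot,0) = u_{0}$ is preserved by the uniform-in-$t$ convergence at $t=0$. The main technical obstacle I anticipate is carefully justifying the Ascoli--Arzel\`a step: one needs both equicontinuity in time (from the $H^{1}$-in-time bound) and precompactness at each $t$ (from the $L^{\infty}$-in-time $H^{2}_{0}$ bound together with the compact embedding into $L^{2}$); once both ingredients are in place, the interpolation to $C^{0,\va}$ is routine.
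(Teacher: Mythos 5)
Your proof is correct and follows the standard minimizing-movements compactness argument (uniform $L^\infty_t H^2_0$ and $L^2_t L^2_x$ bounds, a $1/2$-H\"older estimate in time via Cauchy--Schwarz, Ascoli--Arzel\`a with the compact embedding $H^2_0(\Omega)\hookrightarrow L^2(\Omega)$, and interpolation to get $C^{0,\alpha}$), which is precisely what the paper defers to Theorem 4.1 of \cite{NO}. One small remark: the detour through Lemma \ref{uniform-conv-1} and weak lower semicontinuity to transfer the bounds from $u^{\ve}_{i,n}$ to $u_{i,n}$ is unnecessary --- the minimality inequality $G_{i,n}(u_{i,n})\le G_{i,n}(u_{i-1,n})=E(u_{i-1,n})$ applies verbatim to the unregularized problem \eqref{mini_in}, giving \eqref{bdd-V_n}--\eqref{bdd-Delta-u_in} for $V_n$ and $u_{i,n}$ directly.
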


Next we investigate the regularity of the limit $u$ obtained by Theorem \ref{conv-1}. 
The proof depends only on the uniform estimate on $u_{n}$ obtained in Theorem \ref{C-1-1_u-in}. 
The same argument as in the proof of Theorems 4.2 and 4.3 in \cite{NO} gives us the following:
\begin{thm}
Let $u$ be the function obtained by Theorem \ref{conv-1}. Then it holds that 
\begin{align*}
u_n \to u \quad \text{weakly$^\ast$ in} \quad L^2(0,T;W^{2, \infty}(\Omega)) \quad \text{as} \quad n \to \infty. 
\end{align*} 
Moreover, if $N=1$, 
\begin{align*}
u_n \to u \quad \text{in} \quad C^{0,\vb}([\, 0,T \,];C^{1, \va}(\Omega)) \quad \text{as} \quad n \to \infty
\end{align*}
for every $\va \in (\, 0,1/2 \,)$ and $\vb \in (\, 0, (1-2\va)/8 \,)$, and if $N=2$, $3$, 
\begin{align*}
u_n \to u \quad \text{in} \quad C^{0,\vb}([\, 0,T \,];C^{0, \va}(\Omega)) \quad \text{as} \quad n \to \infty
\end{align*}
for every 
\begin{align*}
0 < \va <  2 - \dfrac{N}{2},  \qquad 0 < \vb <  \left( \dfrac{1}{2} - \dfrac{N}{8} \right) \left( 1 - \dfrac{\va}{2 - N/2} \right). 
\end{align*}
\end{thm}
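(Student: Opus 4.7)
My plan has two parts. First, extract a weak-$*$ compactness statement for $u_n$ in $L^2(0,T;W^{2,\infty}(\Omega))$ from the uniform discrete bound of Theorem \ref{C-1-1_u-in}. Then upgrade this, combined with the $L^2$-in-time convergence of Theorem \ref{conv-1}, into the spatial and temporal Hölder convergences by two interpolation steps.

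For the weak-$*$ convergence, I first pass Theorem \ref{C-1-1_u-in} to the limit $\ve\downarrow 0$. Lemma \ref{uniform-conv-1} together with the $\ve$-independent uniform bound on $\|D^2 u^\ve_{i,n}\|_{L^\infty(\Omega)}$ and weak-$*$ lower semicontinuity of the $L^\infty$-norm yield
\[
\tau_n\sum_{i=1}^n \|D^2 u_{i,n}\|^2_{L^\infty(\Omega)}\le C,
\]
uniformly in $n$. Rewriting this as a uniform bound on the piecewise linear interpolant $u_n$ in $L^2(0,T;W^{2,\infty}(\Omega))$, I apply Banach--Alaoglu in this dual space. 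Uniqueness of the limit, already identified as $u$ through Theorem \ref{conv-1}, forces full-sequence weak-$*$ convergence.

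For the Hölder convergences, I interpolate $w_n:=u_n-u$ using three pieces of information now in hand: $w_n\to 0$ in $C^{0,\va}([0,T];L^2(\Omega))$ for every $\va<1/2$ (Theorem \ref{conv-1}); a uniform bound on $w_n$ in $L^2(0,T;W^{2,\infty}(\Omega))$ (previous step); and a uniform bound in $L^\infty(0,T;H^2_0(\Omega))$ (Proposition \ref{est-mini}). A Gagliardo--Nirenberg type interpolation in space at (almost every) fixed $t$ converts $L^2$-smallness into spatial Hölder smallness: in $N=1$ one reaches $C^{1,\va}$ for $\va<1/2$ using $W^{2,\infty}\hookrightarrow C^{1,1}$, and in $N\in\{2,3\}$ one reaches $C^{0,\va}$ for $\va<(4-N)/2$ using the Morrey embedding for $H^2$. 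A second interpolation, now in the time variable, between the $C^{0,\va}([0,T];L^2)$ decay and the uniform spatial bound produces Hölder regularity in time with the stated exponent $\vb$.

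The main obstacle is the tight bookkeeping of exponents. One must choose the interpolation parameter $\theta$ so that the product exponent coincides with the sharp balance $\vb<(1-2\va)/8$ for $N=1$ and $\vb<(4-N-2\va)/8$ for $N\in\{2,3\}$; these arise by combining the $\va\to 1/2^{-}$ endpoint of the time-Hölder exponent from Theorem \ref{conv-1}, the $(4-N)/2$ Sobolev exponent, and the chain rule for Gagliardo--Nirenberg. The overall template is the one of Theorems 4.2--4.3 of \cite{NO}; the new two-obstacle structure enters only through the fact that the uniform $W^{2,\infty}$-bound on $u_{i,n}$ is itself obtained by passing $u^\ve_{i,n}$ through the $\ve\downarrow 0$ limit of Theorem \ref{Radon-2} and Theorem \ref{C-1-1_u-in}. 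Since all relevant uniform estimates are in place and no additional PDE argument is needed, the remainder of the proof is pure interpolation.
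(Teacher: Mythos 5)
Your proposal is essentially the argument the paper gives: the paper simply states that the uniform bound of Theorem~\ref{C-1-1_u-in} is the only new ingredient and then invokes the interpolation machinery of Theorems~4.2--4.3 of~\cite{NO} verbatim, which is exactly the two-step (weak-$*$ compactness plus space-time Gagliardo--Nirenberg interpolation against the $C^{0,\va}([0,T];L^2)$ convergence and the $L^\infty(0,T;H^2_0)$ bound) scheme you describe. One small imprecision worth noting: the spatial H\"older exponents $\va<1/2$ (for $N=1$) and $\va<2-N/2$ (for $N=2,3$) are the Morrey exponents of $H^2$, so the interpolation is between $H^2$ and $L^2$; the embedding $W^{2,\infty}\hookrightarrow C^{1,1}$ you cite for $N=1$ is not the one that fixes the $\va$-range, and the $L^2(0,T;W^{2,\infty})$ bound plays a role only in the weak-$*$ statement, not in the H\"older statements.
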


In order to complete the proof of Theorem \ref{main-thm}, we make use of the convergence result on the piecewise constant 
interpolation of $\{ u_{i,n}\}$. 
\begin{lem}{\rm (\cite{NO})} \label{conv-piece-const}
Let $\tilde{u}_n$ be the piecewise constant interpolation of $\{ u_{i,n} \}$. If $N=1$, 
\begin{align*}
\tilde{u}_n \to u \quad \text{in} \quad L^\infty(0,T;C^{1,\va}(\Omega)) \quad \text{as} \quad n \to \infty 
\end{align*}
for every $\va \in (\, 0, 1/2 \,)$, where $u$ is the function obtained by Theorem \ref{conv-1}. 
If $N=2$, $3$, 
\begin{align*}
\tilde{u}_n \to u \quad \text{in} \quad L^\infty(0,T;C^{0,\va}(\Omega)) \quad \text{as} \quad n \to \infty 
\end{align*}
for every $\va \in (\, 0, 2-N/2 \,)$. Moreover, for any $N \ge 1$, it holds that 
\begin{align*}
\Delta \tilde{u}_n \rightharpoonup \Delta u \quad \text{in} \quad L^2(0,T;L^2(\Omega)) \quad \text{as} \quad n \to \infty.  
\end{align*} 
\end{lem}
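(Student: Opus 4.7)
The plan is to deduce the lemma from two ingredients established earlier: the $L^\infty(0,T;L^2(\Omega))$-closeness of $\tilde u_n$ to the piecewise linear interpolation $u_n$, and the uniform bound of $\tilde u_n$ in $L^\infty(0,T;H^2_0(\Omega))$. The first part of the statement then follows by Gagliardo--Nirenberg interpolation in the space variable, while the weak convergence of the Laplacian is a routine consequence of weak compactness together with uniqueness of the limit.

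First I would show $\tilde u_n - u_n \to 0$ in $L^\infty(0,T;L^2(\Omega))$. On each subinterval $t\in ((i-1)\tau_n, i\tau_n]$, the defining formulas of Definitions \ref{piece-linear}--\ref{piece-constant} give
\begin{align*}
\tilde u_n(\cdot,t) - u_n(\cdot,t) = (i\tau_n - t)\, V_{i,n},
\end{align*}
so that $\|\tilde u_n(\cdot,t) - u_n(\cdot,t)\|_{L^2(\Omega)} \le \tau_n \|V_{i,n}\|_{L^2(\Omega)}$. The minimality of $u_{i,n}$ compared with $u_{i-1,n}$ yields the energy decrease $\tfrac{\tau_n}{2}\|V_{i,n}\|_{L^2}^2 \le E(u_{i-1,n})-E(u_{i,n})\le E(u_0)$, whence $\tau_n \|V_{i,n}\|_{L^2} \le \sqrt{2\tau_n E(u_0)} \to 0$ uniformly in $i$. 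Combined with Theorem \ref{conv-1} (which gives $u_n \to u$ in $C^{0,\alpha}([0,T];L^2(\Omega))$), this yields $\tilde u_n \to u$ strongly in $L^\infty(0,T;L^2(\Omega))$.

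Next, setting $w_n := \tilde u_n - u$ and using \eqref{bdd-Delta-u_in} (which ensures $\sup_t \|w_n(\cdot,t)\|_{H^2(\Omega)} \le C$ uniformly in $n$), a standard Gagliardo--Nirenberg interpolation between $L^2$ and $H^2$ gives
\begin{align*}
\|w_n(\cdot,t)\|_{C^{k,\alpha}(\Omega)} \le C \|w_n(\cdot,t)\|_{L^2(\Omega)}^{1-\theta} \|w_n(\cdot,t)\|_{H^2(\Omega)}^{\theta},
\end{align*}
for some $\theta<1$ chosen just above the Sobolev threshold: $(k,\alpha)=(1,\alpha)$ with any $\alpha<1/2$ when $N=1$, and $(k,\alpha)=(0,\alpha)$ with any $\alpha<2-N/2$ when $N\in\{2,3\}$. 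Taking the supremum in $t$ and invoking the uniform $H^2$ bound together with the convergence $\|w_n\|_{L^\infty(0,T;L^2)}\to 0$, the first two assertions of the lemma follow.

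For the last assertion, the uniform $L^\infty(0,T;H^2_0)$ bound provides a weakly-$\ast$ convergent subsequence of $\tilde u_n$ in $L^2(0,T;H^2_0(\Omega))$; its limit is identified with $u$ through the already-established strong $L^\infty(0,T;L^2)$ convergence, so the whole sequence converges and in particular $\Delta \tilde u_n \rightharpoonup \Delta u$ in $L^2(0,T;L^2(\Omega))$. The main obstacle is the uniform-in-time Hölder convergence: Theorem \ref{C-1-1_u-in} delivers the pointwise $W^{2,\infty}$ regularity only in an $L^2$-in-time sense, which is insufficient by itself, so one genuinely has to rely on the $L^\infty$-in-time $H^2_0$ bound and interpolate against the $L^\infty$-in-time $L^2$ convergence, and this is exactly where the restriction $N\le 3$ enters through Sobolev's embedding.
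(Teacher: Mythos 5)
Your proof is correct and follows the natural route for this kind of statement: show that the piecewise constant and piecewise linear interpolations are uniformly close in $L^\infty(0,T;L^2(\Omega))$ via the energy dissipation estimate for the discrete scheme, transfer the strong $L^2$-in-space convergence from $u_n$ to $\tilde u_n$ using Theorem \ref{conv-1}, and then upgrade to the Hölder norms by a Gagliardo--Nirenberg interpolation against the uniform $H^2_0$ bound. The constraint $\theta<1$ in your interpolation step gives exactly the exponent ranges $\va<1/2$ (with $k=1$) for $N=1$ and $\va<2-N/2$ (with $k=0$) for $N=2,3$, matching the lemma, and the final weak convergence of the Laplacians follows cleanly by weak compactness in $L^2(0,T;H^2_0(\Omega))$ plus uniqueness of the limit from the already established strong $L^2$ convergence. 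The paper itself cites this lemma from \cite{NO} without reproducing the proof, but the argument you give is essentially the standard one used there; your concluding remark about why Theorem \ref{C-1-1_u-in} alone would not suffice is also accurate.
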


We are in a position to complete the proof of Theorem \ref{main-thm}. Let us define  
\begin{align}
\mu_n(t) = \mu_{i,n} \quad \text{if} \quad t \in (\, (i-1) \tau_n, i \tau_n \,], 
\end{align}
and set 
\begin{align}
\mC_f &= \{\, (x,t) \in (\Omega \setminus \Omega_0) \times \R_+ \mid u(x,t) = f(x) \,\}, \\
\mC_g &= \{\, (x,t) \in (\Omega \setminus \Omega_0) \times \R_+ \mid u(x,t) = g(x) \,\}. 
\end{align}

\smallskip

\noindent
{\it Proof of Theorem \ref{main-thm}}\,\,\,
Let $u$ be the function obtained by Theorem \ref{conv-1}. 
To begin with, along the same lines as in \cite{NO}, we see that 
\begin{align}
\int^T_0 \!\!\! \int_\Omega \left[ \pd_t u (w-u) + \Delta u \Delta (w-u) \right] \, dx dt \ge 0 \quad \text{for any} \quad w \in \mK,  
\end{align}
i.e., $u$ is a weak solution of \eqref{P}. Moreover the uniqueness follows from the results in \cite{B}. 

By virtue of Theorem \ref{Radon-2}, we deduce that 
\begin{align*}
\int^T_0 \mu_n(\Omega)^2 \, dt = \sum^{n}_{i=1} \int^{i \tau_n}_{(i-1) \tau_n} \mu_{i,n}(\Omega)^2 \, dt  
= \tau_n \sum^{n}_{i=1} \mu_{i,n}(\Omega)^2 
< C E(u_0) + T \| \Delta^2 f \|_{L^{\infty}(\Omega_0)}^2. 
\end{align*}
Thus, as $n \to \infty$, 
\begin{align*}
\mu_n \rightharpoonup \bar{\mu} \quad \text{weakly in} \quad L^2(0,T; \mM(\Omega)), 
\end{align*}
i.e., 
\begin{align*}
\int^T_0 \!\!\! \int_\Omega \vp\, d \mu_n dt \to \int^T_0 \!\!\! \int_\Omega \vp\, d \bar{\mu} dt 
\quad \text{for any} \quad \vp \in L^2(0,T; C^\infty_c(\Omega)) \quad \text{as} \quad n \to \infty. 
\end{align*}

Since $\mu_{i,n} \lfloor_{\Omega_0} = \Delta^2 f$, we observe from the definition of $\mu_n$ that 
\begin{align*}
\mu_n(t) \lfloor_{\Omega_0} = \Delta^2 f \quad \text{in} \quad [\, 0,T \,) \quad \text{for any} \quad n \in \N. 
\end{align*}
From now on, we set $\mu_n \lfloor_{\Omega \setminus \Omega_0} = \nu^{+}_{n} - \nu^{-}_{n}$ with 
\begin{align*}
\nu^{\pm}_{n}(t) = \mu^{\pm}_{i,n} \quad \text{if} \quad t \in (\, (i-1) \tau_n, i \tau_n \,],  
\end{align*}
where $\mu^{+}_{i,n}$ and $\mu^{-}_{i,n}$ denote respectively the positive part and the negative part of 
$\mu_{i,n} \lfloor_{\Omega \setminus \Omega_0}$. 
Since Theorem \ref{Radon-2} deduces that 
\begin{align*}
\int^T_0 \nu^{\pm}_{n}(\Omega)^2 \, dt < C, 
\end{align*}
there exist measures $\bar{\mu}_{\pm}$ such that 
\begin{align*}
\nu^{\pm}_{n} \rightharpoonup \bar{\mu}_{\pm} \quad \text{weakly in} \quad L^2(0,T;\mM(\Omega)) \quad 
\quad \text{as} \quad n \to \infty, 
\end{align*}
i.e., for any $\vp \in L^2(0,T; C^{\infty}_c(\Omega \setminus \Omega_0))$, 
\begin{align*}
\int^T_0 \!\!\! \int_\Omega \vp \, d \nu^{\pm}_{n} dt \to \int^T_0 \!\!\! \int_\Omega \vp \, d \bar{\mu}_{\pm} dt 
\quad \text{as} \quad n \to \infty. 
\end{align*}
On the other hand, it holds that  
\begin{align*}
\int^T_0 \!\!\! \int_\Omega \vp \, d \nu^{\pm}_n dt 
& = \pm \int^T_0 \!\!\! \int_\Omega \left[ \Delta \tilde{u}_n \Delta \vp + V_n \vp \right] \, dx dt  \\
& \to \pm  \int^T_0 \!\!\! \int_\Omega \left[ \Delta u \Delta \vp + \pd_t u \vp \right] \, dx dt 
\quad \text{as} \quad n \to \infty. 
\end{align*}
Thus we infer that $\bar{\mu}_{\pm} = \pm(\Delta^2 u + \pd_t u)$. 
We claim that 
\begin{align} \label{mu-supp-1}
{\rm supp}\, \bar{\mu}_{+} \subset \mC_f, \quad {\rm supp}\, \bar{\mu}_{-} \subset \mC_g. 
\end{align}
We shall prove the former relation. Let $x_0 \in \Omega \setminus (\mC_f \cup \Omega_0)$. 
Since $u$ is continuous in $\Omega \times \R_+$, there exist an open set $W \subset \Omega \setminus \Omega_0$, 
$0< t_1 < t_2 < T$, and $\vd>0$ such that  
\begin{align*}
u(x,t) - f(x) > \vd \quad \text{in} \quad W \times (\, t_1, t_2 \,). 
\end{align*}
It follows from Lemma \ref{conv-piece-const} that there exists a constant $N>0$ such that 
\begin{align*}
\tilde{u}_n(x,t) - u(x,t) > - \dfrac{\vd}{2} \quad \text{in} \quad W \times (\, t_1, t_2 \,) \quad \text{for any} \quad n \ge N, 
\end{align*}
so that 
\begin{align*}
\tilde{u}_n(x,t) - f(x) > \dfrac{\vd}{2} \quad \text{in} \quad W \times (\, t_1, t_2 \,) \quad \text{for any} \quad n \ge N. 
\end{align*}
This means that, for any $n \ge N$,  
\begin{align} \label{supp-rela-1}
W \times (\, t_1, t_2 \,) \subset \Omega \setminus (\mC^{+}_{i,n} \cup \Omega_0) 
\quad \text{for each} \quad \left[ \frac{t_1}{\tau_n}\right] \le i \le  \left[ \frac{t_2}{\tau_n} \right]. 
\end{align}
Thus we deduce that for any $\vp \in C_c((\, t_1,t_2 \,);C^{\infty}_{c}(W))$ 
\begin{align*}
\int^T_0 \!\!\! \int_\Omega \vp \, d \bar{\mu}_+ dt 
 &= \lim_{n \to \infty} \int^T_0 \!\!\! \int_\Omega \vp \, d \nu^{+}_{n} dt 
  = \lim_{n \to \infty} \int^T_0 \!\!\! \int_\Omega \left[ \Delta \tilde{u}_n \Delta \vp + V_n \vp \right] \, dx dt \\
 &= \lim_{n \to \infty} \sum^{n}_{i=1} 
        \int^{i \tau_n}_{(i-1) \tau_n}  \! \int_\Omega \left[ \Delta u_{i,n} \Delta \vp + V_{i,n} \vp \right] \, dx dt =0. 
\end{align*}
The last equality follows from \eqref{supp-rela-1}. This implies the relation \eqref{mu-supp-1}. 
\qed



\end{document}